\newtheorem{thm}{Theorem}[section]
\newtheorem{lem}[thm]{Lemma}
\newtheorem{prop}[thm]{Proposition}
\newtheorem{rem}[thm]{Remark}
\numberwithin{equation}{section}
\renewcommand{\thefootnote}
\newcommand\co{\operatorname{co}}
\renewcommand\Re{\operatorname{Re}}
\author {B\'echir Amri and  Mohamed Gaidi}
\title{  $L^p-L^q$ estimates for the solution  of the  Dunkl wave equation  }
\date{}
\begin{document}
 \maketitle
\begin{center}
   Universit\'{e} Tunis El Manar, Facult\'{e} des sciences de Tunis,\\ Laboratoire d'Analyse Math\'{e}matique
       et Applications,\\ LR11ES11, 2092 El Manar I, Tunisie.\\
     \textbf{ e-mail:} bechir.amri@ipeit.rnu.tn,  gaidi.math@gmail.com
\end{center}
  \begin{abstract}
In this paper, our main aim is to derive  $L^p-L^q$
 estimates of   the   solution $u_k(x,t)$ ( t fixed) of the Cauchy problem  for  the homogeneous linear wave equation  associated
to  the Dunkl Laplacian $\Delta_k$,  $$\Delta_ku_k(x,t)= \partial_t^2u_k (x,t),\quad
\partial_tu_k(x,0)= f(x),\quad u_k(x,0)= g(x).$$
We extend to Dunkl setting the  estimates  given   by Srichartz in \cite{Sti}   for the ordinary wave equation .
\footnote{\noindent\textbf{Key words and phrases:}   Dunkl Transform, Multiplier operator , Wave Equation.  \\
\textbf{Mathematics Subject Classification}. Primary 42A38; 42A45. Secondary 35L05. }
 \end{abstract}
  \section{ Introduction and Background}
   In his seminal paper \cite{D1}, Dunkl constructed  a family of differential-difference operators associated to
a finite reflection group on a Euclidean space, which are known by   Dunkl operators in the literature. Associated with the eigenfunctions of the
Dunkl operators and a  non negative multiplicity function,   Dunkl \cite{D2} introduced
  an integral transform on $\mathbb{R}^n$  called   Dunkl transform   which generalises the classical
Fourier transform. By means of this  transform several important results  of the classical Fourier analysis have been generalized to Dunkl analysis which
  can offer new perspectives on familiar topics from Harmonic Analysis and Partial Differential Equations.
 \par In this paper, we are interested by the $L^p-L^q$  estimates of the solutions of wave equations associated to Dunkl Laplace operator, in particular we generalize the estimates for ordinary wave equation due to Stchartz \cite{Sti}.
The techniques used involve  interpolation of  an appropriate  analytic families of operators  in a way similar
to that used in the classical  case. To begin we  first   provide  some preliminary definitions  and background materials for the Dunkl analysis . References are \cite{D1,D2,J1,R3,R4,Xu}
\par Let $G\!\subset\!\text{O}(\mathbb{R}^n)$
be a finite reflection group associated to a reduced root system $R$
and $k:R\rightarrow[0,+\infty)$  be a $G$--invariant function
(called multiplicity function).
Let $R^+$ be a positive root subsystem. The Dunkl operators  $D_\xi^k$ on $\mathbb{R}^n$ are
the following $k$--de\-for\-ma\-tions of directional derivatives $\partial_\xi$
by difference operators\,:
\begin{equation}\label{Dxi}
D_\xi^k f(x)=\partial_\xi f(x)
+\sum_{\,\upsilon\in R^+}\!k(\upsilon)\,\langle\upsilon,\xi\rangle\,
\frac{f(x)-f(\sigma_\upsilon.\,x)}{\langle\upsilon,\,x\rangle}\,,
\end{equation}
where here
 $\sigma_\upsilon$ is the reflection
with respect to the hyperplane orthogonal to $ \upsilon$ and $\langle .,\, .\rangle$ is the usual Euclidean
inner product, we denote  $| \,.\,  |$ its induced norm. If $(e_j)_{j}$ is the canonical basis of $\mathbb{R}^n$ we simply write
 $D^k_{j}$ instead of  $D^k_{e_j}$. In analogy to the ordinary  Laplacian we define  the Dunkl Laplacian operator by
$$\Delta_k=\sum_{j=1}^n(D^k_{j})^2.$$
The Dunkl operators are antisymmetric
with respect to the measure $w_k(x)\,dx$
with density
$$
w_k(x)=\,\prod_{\,\upsilon\in R^+}|\,\langle\upsilon,x\rangle\,|^{\,2\,k(\upsilon)}\,.
$$

The operators $\partial_\xi$ and $D_\xi^k$
are intertwined by a Laplace--type operator
\begin{eqnarray*}\label{vk}
V_k\hspace{-.25mm}f(x)\,
=\int_{\mathbb{R}^n}\hspace{-1mm}f(y)\,d\nu_x(y)
\end{eqnarray*}
associated to a family of compactly supported probability measures
\,$\{\,\nu_x\,|\,x\!\in\!\mathbb{R}^n\hspace{.25mm}\}$\,.
Specifically, \,$\mu_x$ is supported in the the convex hull $\co(G.x)\,.$
\par For every $y\!\in\!\mathbb{C}^n$\!,
the simultaneous eigenfunction problem
\begin{equation*}
D_\xi^k f=\langle y,\xi\rangle\,f,
\qquad\forall\;\xi\!\in\!\mathbb{R}^n,
\end{equation*}
has a unique solution $f(x)\!=\!E_k(x,y)$
such that $E_k(0,y)\!=\!1$, called the Dunkl kernel and is given by
\begin{equation*}\label{EV}
E_k(x,y)\,
=\,V_k(e^{\,\langle.,\,y\,\rangle})(x)\,
=\int_{\mathbb{R}^n}\hspace{-1mm}e^{\,\langle z,y\rangle}\,d\nu_x(z)
\qquad\forall\;x\!\in\!\mathbb{R}^n.
\end{equation*}
 When $ k =0$ the Dunkl kernel $E_k(x,y)$ reduces to the exponential $e^{\langle x,y\rangle}$.
\par The Dunkl transform  is defined on $L^1(\mathbb{R}^n\!,w_k(x)dx)$ by
$$
\mathcal{F}_kf(\xi)=c_k^{-1}
\int_{\mathbb{R}^n}\!f(x)\,E_k(x,-i\,\xi)\,w_k(x)\,dx\,,
$$
where
$$
c_k\,= \int_{\mathbb{R}^n}\!e^{-\frac{|x|^2}2}\,w(x)\,dx\, .
$$
If $k=0$  then Dunkl transform  coincides with the usual Fourier transform.
In the sequel, we denote by  $\|\,.\,\|_{p,k}$    the  norm  of $L^p(\mathbb{R}^n,w_k(x)dx)$,  $1\leq p<\infty$.
Below  we summarize some of the    useful
  properties of the Dunkl transform.
\begin{itemize}
 \item[(i)] For $f\in \mathcal{S}(\mathbb{R}^n)$  (the Schwartz space) we have   $\mathcal{F}_k(D_\xi^k f )(x)=i\langle\xi,x\rangle\mathcal{F}_k(f)(x)$.
In particular $$\mathcal{F}_k(\Delta_k f )(x)=-|x|^2\mathcal{F}_k(f)(x).$$
\item[(ii)]
The Dunkl transform is a topological automorphism
of the Schwartz space $\mathcal{S}(\mathbb{R}^n)$.
\item[(iii)]
(\textit{Plancherel Theorem\/})
The Dunkl transform extends to
an isometric automorphism of $L^2(\mathbb{R}^n\!,w_k(x)dx)$.
\item[(iv)]
(\textit{Inversion formula\/})
For every $f\!\in\!\mathcal{S}(\mathbb{R}^n)$,
and more generally for every $f\!\in\!L^1(\mathbb{R}^n\!,w_k(x)dx)$
such that $\mathcal{F}_kf\!\in\!L^1(\mathbb{R}^n\!,w_k(x)dx)$,
we have
$$
f(x)=\mathcal{F}_k^2\!f(-x)\qquad\forall\;x\!\in\!\mathbb{R}^n.
$$
\item[( vi)] The Hausdorff-Young inequality: if $1<p\leq 2$, then
\begin{equation}\label{Hausd}
  \|\mathcal{F}_k(f)\|_{1,p'}\leq \|f\|_{1,p}
\end{equation}
where $p'$ is the conjugate exponent of $p$.
\item[( v )]
If $f$ is a radial function in $L^1(\mathbb{R}^n\!,w_k(x)dx)$ such that  $f(x)=\widetilde{f}(|x|)$, then
$\mathcal{F}_k(f)$ is also radial and
\begin{equation}\label{rad}
    \mathcal{F}_k(f)(x)= \frac{1}{ |x|^{\gamma_k+n/2-1}}\int_0^\infty\widetilde{f}(s)  J_{\gamma_k+n/2-1}(s|x|)\;s^{ \gamma_k+n/2}ds; \quad x\in\mathbb{R}^n.
\end{equation}
where
\begin{equation}\label{gk}
 \gamma_k=\sum_{\upsilon\in R^+} k(\upsilon)
\end{equation}
  and  $J_\nu$ is the Bessel function,
$$J_\nu(z)=\sum_{n=0}^\infty\frac{(-1)^n(z/2)^{2n+\nu}}{n!\Gamma(n+\nu+1)}.$$
\end{itemize}
\par Let $x\in \mathbb{R}^n$, the
Dunkl translation operator $f\rightarrow\tau_x(f)$ is defined on $L^2_k(\mathbb{R}^n,w_k(x)dx)$ by
\begin{eqnarray*}\label{dutr}
\mathcal{F}_k(\tau_x(f))(y)= \mathcal{F}_kf(y)\,E_k(x,iy), \quad
y\in\mathbb{R}^n.
\end{eqnarray*}
In particular, when $f\in S(\mathbb{R}^n)$ we have
\begin{equation*}
  \tau_x(f)(y)=\int_{\mathbb{R}^n}\mathcal{F}_k(f)(\xi)E_k(x,i\xi)E_k(y,-i\xi)w_k(\xi)d\xi.
\end{equation*}
and
\begin{equation}\label{trs}
  \|\tau_x(f)\|_{\infty,k}\leq \|\mathcal{F}_k(f)\|_{1,k}.
\end{equation}
  In the case when $f(x)=\widetilde{f}(|x|)$ is a  continuous radial function  that belongs to   $ L^2(\mathbb{R}^n\!,w_k(x)dx)$,  the Dunkl translation is represented by the following integral \cite{Dai},
\begin{eqnarray}\label{trad}
\tau_x(f)(y)=
\int_{\mathbb{R}^{n}}\widetilde{f}\left( \sqrt{|y|^2+|x|^2+2\langle y,\eta\rangle}\;\right)d\nu_x(\eta),
 \end{eqnarray}
This formula shows that the Dunkl translation operators can   be extended to all radial functions $f$ in $L^p (\mathbb{R}^n,w_k(x)dx)$, $1\leq p\leq \infty$  and the following holds
\begin{equation}\label{trp}
  \|\tau_x(f)\|_{p,k}\leq \|f\|_{p,k }\,.
  \end{equation}
 \par We define  the Dunkl convolution product   for suitable functions $f$ and $g$ by
$$f*_kg(x)=\int_{\mathbb{R}^N} \tau_x(f)(-y)g(y)d\mu_k(y),\quad x\in\mathbb{R}^n.$$
We note that it is commutative and satisfies the following property:
\begin{eqnarray}\label{conv}
 \mathcal{F}_k(f*_kg)=\mathcal{F}_k(f)\mathcal{F}_k(g).
\end{eqnarray}
Moreover, the operator $ f \rightarrow f*_kg $ is bounded on $L^p (\mathbb{R}^n,w_k(x)dx)$
provide $g$ is a bounded radial function in $L^1(\mathbb{R}^n,w_k(x)dx)$. In particular we have the   the following Young's inequality:
\begin{equation}\label{Y}
 \|f*_kg\|_{p,k}\leq \|g\|_{1,k}\|f\|_{p,k}\;.
\end{equation}
 \par We now come to the main subject. We consider the following Cauchy problem for the Dunkl wave equation
$$\Delta_k u_k(x,t)=\partial_t^2 u_k(x,t),\quad\partial_t u_k(x,0)=f(x),\quad u_k(x,0)=g(x); \quad (x,t)\in \mathbb{R}^n\times \mathbb{R}$$
 where the functions $f$ and $g$ belong to $S(\mathbb{R}^n)$.
The solution  is  given in term  of Dunkl transform by
\begin{equation}\label{wave}
   u_k(x,t) =\mathcal{F}_k^{-1}\left(\frac{\sin t |\xi| }{ |\xi| } \mathcal{F}_kf(\xi)+ \cos( t|\xi|) \mathcal{F}_k(g)(\xi)\right)(x).
\end{equation}
The study of the Dunkl  wave equation  was initiated by Ben Sa\"{\i}d and  {\O}rsted   \cite{said1} where  they  computed  the solution $u_k$ and established  validity of Huygens' Principles.  Also in   this context  Mejjaoli \cite{Majj1,Majj2}
  studied the  mixed-norm Strichartz type estimates for  $u_k$.
 The main contribution of our work is  the following theorem.
\begin{thm}\label{thm2}
For $t\neq0$ there exists $C(t)>0$  such that
\begin{equation}\label{eqthm1}
   \|u(.,t)\| _{q,k}\leq C(t)\left( \|f\|_{p,k}+ \left\| \sum_{j=1}^n|D_j^kg|\right\|_{p,k}\right),
\end{equation}
provide that
\begin{equation}\label{q1}
2\; \frac{n+2\gamma_k+1}{n+2\gamma_k+3}\leq p\leq2;\quad \frac{n+2\gamma_k}{q}=\frac{2\gamma_k+n-1}{2}-\frac{1}{p'}
\end{equation}
and
\begin{equation}\label{q2}
 2\frac{n+2\gamma_k }{n+2\gamma_k+2}\leq p\leq2\frac{n+2\gamma_k+1}{n+2\gamma_k+3};\quad \frac{1}{q}=\frac{n+2\gamma_k-1}{2}-\frac{n+2\gamma_k}{p'},
\end{equation}
where $\gamma_k$ is given by (\ref{gk}) and $p'$ is the conjugate exponent of $p$.
\end{thm}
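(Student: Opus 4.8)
\textit{Proof plan.} Write $D=n+2\gamma_k$ for the homogeneous dimension; recall that $\|h(\lambda\,\cdot)\|_{p,k}=\lambda^{-D/p}\|h\|_{p,k}$, that $D_j^k$ is homogeneous of degree $-1$, hence $\Delta_k\big(h(\lambda\,\cdot)\big)=\lambda^2(\Delta_kh)(\lambda\,\cdot)$. First I would reduce to $f,g\in\mathcal S(\mathbb R^n)$, and, since replacing $t$ by $-t$ amounts to replacing $(f,g)$ by $(-f,g)$, to $t=1$: if $u_k$ solves the Cauchy problem with data $(f,g)$ then $v(x,t)=u_k(\lambda x,\lambda t)$ solves it with data $\big(\lambda\,f(\lambda\,\cdot),\,g(\lambda\,\cdot)\big)$, so the case $t=1$ yields \eqref{eqthm1} for all $t\neq0$ with $C(t)=C\,|t|^{1-D(1/p-1/q)}$. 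By \eqref{wave} we have $u_k(\cdot,1)=u^f+u^g$ with $\mathcal F_ku^f(\xi)=\tfrac{\sin|\xi|}{|\xi|}\mathcal F_kf(\xi)$ and $\mathcal F_ku^g(\xi)=\cos|\xi|\,\mathcal F_kg(\xi)$. Using property (i), $|\xi|^2\mathcal F_kg=-\mathcal F_k(\Delta_kg)=-\sum_j(i\xi_j)\mathcal F_k(D_j^kg)$, so $\cos|\xi|\,\mathcal F_kg(\xi)=-\sum_j\tfrac{\cos|\xi|}{|\xi|}\cdot\tfrac{i\xi_j}{|\xi|}\,\mathcal F_k(D_j^kg)(\xi)$. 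Since $\tfrac{\sin|\xi|}{|\xi|}$ and $\tfrac{\cos|\xi|}{|\xi|}$ are real linear combinations of $e^{\pm i|\xi|}/|\xi|$, and the Dunkl--Riesz operators $R_j$ (Dunkl multiplier $i\xi_j/|\xi|$) are bounded on $L^q(w_k\,dx)$ for $1<q<\infty$, it suffices to show that the operator $\mathcal W$ with radial Dunkl multiplier $e^{\pm i|\xi|}/|\xi|$ satisfies $\|\mathcal Wh\|_{q,k}\lesssim\|h\|_{p,k}$ for $(p,q)$ as in \eqref{q1}--\eqref{q2}; applying this to $h=f$ controls $u^f$, applying it to $h=D_j^kg$ and using the $L^q$-boundedness of $R_j$ controls $u^g$, and then $\|u^g\|_{q,k}\lesssim\sum_j\|D_j^kg\|_{p,k}\lesssim\big\|\sum_j|D_j^kg|\big\|_{p,k}$.

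I would then split $\mathcal W=\mathcal W_{\mathrm{lo}}+\mathcal W_{\mathrm{hi}}$ with a cutoff $\psi\in C^\infty$, $\psi\equiv0$ on $[0,\tfrac12]$, $\psi\equiv1$ on $[1,\infty)$. The low part is elementary. When $\mathcal W$ carries the multiplier $\tfrac{\sin|\xi|}{|\xi|}$ (the $u^f$ case), $\mathcal W_{\mathrm{lo}}$ has $C_c^\infty$ multiplier, hence is Dunkl convolution with a radial Schwartz kernel and maps $L^p\to L^q$ for all $1\le p\le q\le\infty$ by a Young inequality for radial kernels (bootstrapped from \eqref{trp} and \eqref{Y}). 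When $\mathcal W$ carries $\tfrac{\cos|\xi|}{|\xi|}$ and acts on $D_j^kg$ (the $u^g$ case), $\mathcal W_{\mathrm{lo}}$ has multiplier $(1-\psi(|\xi|))\tfrac{\cos|\xi|}{|\xi|}\cdot\tfrac{i\xi_j}{|\xi|}$: a $|\xi|^{-1}$ singularity at the origin, localized to low frequencies, times bounded factors; its Dunkl kernel is bounded near $0$ (odd symmetry) and $O(|x|^{1-D})$ at infinity, hence lies in $L^r(w_k\,dx)$ for all $r\in(\tfrac D{D-1},\infty]$ and in $L^{D/(D-1),\infty}$, so by Young and the Dunkl Hardy--Littlewood--Sobolev inequality it maps $L^p\to L^q$ whenever $\tfrac1p-\tfrac1q\ge\tfrac1D$ --- which holds throughout \eqref{q1}--\eqref{q2}. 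Everything now reduces to the oscillatory piece $\mathcal W_{\mathrm{hi}}$, with multiplier a combination of $\Psi(\xi)=\psi(|\xi|)e^{\pm i|\xi|}|\xi|^{-1}$.

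For $\mathcal W_{\mathrm{hi}}$ I would prove three endpoint bounds and interpolate. \textbf{(A)} $L^2\to L^{2D/(D-2)}$ and \textbf{(C)} $L^{2D/(D+2)}\to L^2$: since $\mathcal W_{\mathrm{hi}}\mathcal W_{\mathrm{hi}}^*=\mathcal W_{\mathrm{hi}}^*\mathcal W_{\mathrm{hi}}$ is Dunkl convolution with $\mathcal F_k^{-1}(\psi(|\cdot|)^2|\cdot|^{-2})$, which is $O(|x|^{2-D})$ and so lies in $L^{D/(D-2),\infty}(w_k\,dx)$, the Dunkl Riesz-potential inequality gives the $L^{2D/(D+2)}\to L^{2D/(D-2)}$ bound for the product, and $TT^*$ (together with duality) yields (A) and (C). \textbf{(B)} $L^{p_\star}\to L^{p_\star'}$ with $p_\star=\tfrac{2(D+1)}{D+3}$: embed $\mathcal W_{\mathrm{hi}}$ into the Stein-admissible analytic family $\mathcal W_z$ with multiplier $\psi(|\xi|)e^{\pm i|\xi|}|\xi|^{-z}$ ($\mathcal W_1=\mathcal W_{\mathrm{hi}}$). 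On $\mathrm{Re}\,z=0$ the multiplier is bounded, so $\|\mathcal W_z\|_{L^2\to L^2}\le1$; on $\mathrm{Re}\,z=\tfrac{D+1}2$ one must show $\|\mathcal W_z\|_{L^1\to L^\infty}\lesssim(1+|\mathrm{Im}\,z|)^{M}$, i.e. that the Dunkl kernel $\mathcal F_k^{-1}(\psi(|\cdot|)e^{\pm i|\cdot|}|\cdot|^{-z})$ is (essentially) bounded. By the radial representation \eqref{rad} this kernel equals $|x|^{-(D/2-1)}\int_0^\infty\psi(s)\,s^{D/2-z}e^{\pm is}J_{D/2-1}(s|x|)\,ds$; inserting the asymptotic expansion of $J_{D/2-1}$ reduces it to one-dimensional oscillatory integrals which are non-stationary for $|x|$ bounded away from the light cone $|x|=1$ and carry one non-degenerate stationary point (of order $\tfrac12$) on it, and a careful stationary-phase estimate --- of the logarithmic borderline type, exactly as in Strichartz \cite{Sti} --- gives the claim. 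Stein interpolation between $\mathrm{Re}\,z=0$ and $\mathrm{Re}\,z=\tfrac{D+1}2$, read off at the real point $z=1$ (i.e. $\theta=\tfrac2{D+1}$), produces precisely (B), since $(1-\theta)(\tfrac12,\tfrac12)+\theta(1,0)=(\tfrac1{p_\star},\tfrac1{p_\star'})$. Finally Riesz--Thorin interpolation between (A) and (B) yields the range \eqref{q1}, and between (B) and (C) the range \eqref{q2}; summing the four pieces $u^f_{\mathrm{lo}},u^f_{\mathrm{hi}},u^g_{\mathrm{lo}},u^g_{\mathrm{hi}}$ and undoing the dilation completes the proof. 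The heart of the matter --- and the step I expect to be the main obstacle --- is the uniform $L^\infty$ bound for the Dunkl kernel of $\psi(|\cdot|)e^{\pm i|\cdot|}|\cdot|^{-z}$ on the line $\mathrm{Re}\,z=\tfrac{D+1}2$, the Dunkl analogue of Strichartz's lemma, together with the delicate handling of its logarithmic endpoint on the light cone.
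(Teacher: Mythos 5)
Your overall architecture --- rescaling to $t=1$, splitting $u=u^f+u^g$, converting the cosine part into $\sum_j\frac{\cos|\xi|}{|\xi|}\mathcal F_k\bigl(R_j(D_j^kg)\bigr)$ and pulling the Riesz transforms out via Theorem \ref{Riesz}, a low/high frequency splitting, and Riesz--Thorin interpolation of the high part between $L^2$-based bounds and an $L^{p_\star}\to L^{p_\star'}$ bound at $p_\star=2(D+1)/(D+3)$, $D=n+2\gamma_k$ --- is essentially the paper's, and your endpoints (A), (C) and the final interpolation do reproduce the ranges \eqref{q1}--\eqref{q2}. Keeping the multiplier radial so that \eqref{trp} controls $\|\tau_xK\|_{\infty,k}$ by $\|K\|_{\infty,k}$ is a sensible instinct (it is exactly the non-radiality of the kernel $x_j\Phi_{iy}$ that forces the paper's hardest step, Lemma \ref{14}). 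The problem is that the one step you yourself identify as the heart of the matter, the endpoint (B), is set up in a way that cannot work.

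Concretely: the $L^1\to L^\infty$ bound you need on the line $\Re(z)=(D+1)/2$ for the family with multiplier $\psi(|\xi|)e^{\pm i|\xi|}|\xi|^{-z}$ is false, already for $k=0$. Inserting the Bessel asymptotics into \eqref{rad}, the kernel near the light cone is governed by $\int_1^\infty \psi(s)\,s^{(D-1)/2-z}e^{is(1-|x|)}\,ds$, which at $z=(D+1)/2$ is $\int_1^\infty\psi(s)s^{-1}e^{is(1-|x|)}ds$ and diverges logarithmically as $|x|\to1$; for $\Im(z)=y\neq0$ the regularized value is only $O(1/|y|)$, which violates the admissibility condition \eqref{log} as $y\to0$. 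No stationary-phase argument can produce a bound that is not there, so Stein interpolation cannot be run on this family as normalized, and retreating to $\Re(z)=(D+1)/2+\varepsilon$ loses exactly the endpoint $p_\star$ that \eqref{q1}--\eqref{q2} require. This is precisely why Strichartz --- and the paper --- interpolate the \emph{physical-space} family $\Phi_z=\frac{2^z}{\Gamma(1-z)}(1-|x|^2)_+^{-z}$, whose Dunkl transform is $|\xi|^{z-\gamma_k-n/2}J_{\gamma_k+n/2-z}(|\xi|)$ by Proposition \ref{prop}: the $L^1\to L^\infty$ line is then $\Re(z)=0$, where the kernel is the manifestly bounded radial function $\frac{2^{iy}}{\Gamma(1-iy)}(1-|x|^2)_+^{-iy}$ with norm $\le ce^{c|y|}$ by \eqref{gam}, the $L^2\to L^2$ line is $\Re(z)=\gamma_k+(n+1)/2$ by \eqref{be}, and reading off at $z=\gamma_k+(n-1)/2$ gives the $\sin|\xi|/|\xi|$ multiplier (Theorem \ref{th1}(b),(c), which disposes of $u^f$ with no frequency cutoff at all). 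For $u^g$ a second issue then surfaces which your plan inherits once (B) is repaired: the multiplier $\cos|\xi|/|\xi|$ is \emph{not} of the form $\mathcal F_k\Phi_z$ for any $z$ (the order $-1/2$ Bessel function gives $\cos|\xi|$, not $\cos|\xi|/|\xi|$), so the paper must reinstate a factor $\xi_j$ via the recurrences \eqref{111}--\eqref{112}, producing the non-radial kernel $x_j\Phi_{iy}$ whose translates are estimated in Lemma \ref{14}. Either you adopt that route, or you must supply a genuinely new admissible family with a valid $L^1\to L^\infty$ boundary line; as written, (B) is a gap, not a technicality.
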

The Proof is  based on complex interpolation  method much like the
proof  given in \cite{Sti}.  For the reader's convenience we recall   the Stein's  Interpolation Theorem \cite{Stein2}.
\par Let $(X,M, \mu)$ and $(Y, N, \nu)$ be $\sigma$-finite measure spaces and
$$S=\{z\in\mathbb{C};\;a\leq Re(z)\leq b\}, \quad a<b.$$
We suppose that we are given a linear operator $T_z$, for each $z \in S$, on the
space of simple functions in $L^1(M,\mu)$ into the space of measurable functions
on $N$. If $f$ is a simple function in $L^1(M,\mu)$ and and $g$ a simple function
in $L^1(N,\nu)$, we assume furthermore that $gT_z(f)\in L^1(N,\nu)$.
The family of operators $\{T_z\}$ is called admissible if
 the mapping
$$F:\; z\rightarrow \int_Ng\,T_z(f)d\nu$$
is holomorphic in the interior of $S$
 and continuous on S, and  there exists a
constant $c< \pi(b-a)$ such that
\begin{equation}\label{log}
 \sup_{z\in S}e^{-c|Im(z)|}\log|F(z)|<\infty.
\end{equation}
\begin{thm}[Stein,\cite{Stein2}]\label{sstt} Let $1 \leq  p_0, p_1, q_0, q_1 \leq  \infty $  and $\{T_z\}$, $z\in S$ , be  an admissible family of linear operators such
$$\|T_{a+iy}(f)\|_{q_0}\leq M_0(y)\|f\|_{p_0}\quad\text{and}\quad \|T_{b+iy}(f)\|_{q_1}\leq M_1(y)\|f\|_{p_1}$$
for each real number y  and  each simple function $ f\in L^1(M,\mu)$. If, in addition, the constants $M_j (y)$, $j=1,2$,
satisfy
$$e^{-c|y|}\log(M_j(y))<\infty$$
for some $c < \pi(b-a)$,  then for all  $t \in [0, 1]$ there exists a constant $M_t$ such that
 $$\|T_{\theta_t}(f)\|_{q_t}\leq M_t \|f\|_{p_t}$$
for all simple functions $f$ provided
$$\theta_t=(1-t)a+tb, \quad \frac{1}{p_t}=\frac{(1-t)}{p_0} +\frac{t}{p_1} \quad\text{and}\quad \frac{1}{q_t}=\frac{(1-t)}{q_0} +\frac{t}{q_1}$$
\end{thm}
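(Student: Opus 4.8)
My plan is to reproduce Stein's original complex-interpolation argument in the present abstract setting. First I would normalize the strip: the affine substitution $z\mapsto a+(b-a)z$ (with the operators relabelled accordingly) carries $S$ onto $\{0\le\Re z\le1\}$, replaces $\theta_t$ by $t$, and turns every growth exponent ``$<\pi(b-a)$'' into an exponent ``$<\pi$'', so from now on $a=0$, $b=1$. The endpoints $t\in\{0,1\}$ being the hypotheses, fix $t\in(0,1)$. Since $T_t$ is linear and the claimed inequality is homogeneous in $f$, it suffices to exhibit a constant $M_t$ such that $\|T_t f\|_{q_t}\le M_t$ whenever $f$ is a simple function in $L^1(M,\mu)$ with $\|f\|_{p_t}=1$. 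Invoking the duality
$$\|h\|_{q_t}=\sup\bigl\{\ \bigl|\int_Y g\,h\,d\nu\bigr|:\ g\text{ simple},\ \|g\|_{q_t'}\le1\ \bigr\}$$
(valid on $\sigma$-finite spaces, with $q_t'=1$ read when $q_t=\infty$) together with the measurability of $T_t f$, the task reduces to bounding $\bigl|\int_Y g\,T_t(f)\,d\nu\bigr|$ by $M_t$, uniformly over simple $g$ with $\|g\|_{q_t'}=1$.

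Next I would introduce the analytic family of test functions. Write $f=\sum_j|a_j|e^{i\alpha_j}\mathbf{1}_{E_j}$ and $g=\sum_k|b_k|e^{i\beta_k}\mathbf{1}_{F_k}$ over disjoint sets of finite measure, put $\tfrac1{p(z)}=\tfrac{1-z}{p_0}+\tfrac{z}{p_1}$ and $\tfrac1{q'(z)}=\tfrac{1-z}{q_0'}+\tfrac{z}{q_1'}$ (so $p(t)=p_t$, $q'(t)=q_t'$), and define the simple functions
$$f_z=\sum_j|a_j|^{\,p_t/p(z)}e^{i\alpha_j}\mathbf{1}_{E_j},\qquad g_z=\sum_k|b_k|^{\,q_t'/q'(z)}e^{i\beta_k}\mathbf{1}_{F_k}$$
(conventions $0^{(\cdot)}=0$, $1/\infty=0$), so that $f_t=f$, $g_t=g$. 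By linearity of $T_z$,
$$\Phi(z):=\int_Y g_z\,T_z(f_z)\,d\nu=\sum_{j,k}|a_j|^{\,p_t/p(z)}\,|b_k|^{\,q_t'/q'(z)}\,e^{i(\alpha_j+\beta_k)}\int_Y\mathbf{1}_{F_k}\,T_z(\mathbf{1}_{E_j})\,d\nu .$$
Each integral $\int_Y\mathbf{1}_{F_k}T_z(\mathbf{1}_{E_j})\,d\nu$ is holomorphic in the interior of $S$ and continuous on $S$ by admissibility, and each prefactor $|a_j|^{p_t/p(z)}$, $|b_k|^{q_t'/q'(z)}$ is entire with modulus depending only on $\Re z$, hence bounded on $S$. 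Therefore $\Phi$ is holomorphic in the interior of $S$, continuous on $S$, and — being a bounded linear combination of finitely many functions each obeying (\ref{log}) — it satisfies $\sup_{z\in S}e^{-c|\Im z|}\log|\Phi(z)|<\infty$ for some $c<\pi$.

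Then I would record the boundary estimates. On $\Re z=0$, i.e. $z=iy$, one has $\Re\bigl(1/p(iy)\bigr)=1/p_0$ and $\Re\bigl(1/q'(iy)\bigr)=1/q_0'$, whence
$$\|f_{iy}\|_{p_0}=\|f\|_{p_t}^{\,p_t/p_0}=1,\qquad \|g_{iy}\|_{q_0'}=\|g\|_{q_t'}^{\,q_t'/q_0'}=1$$
(with the obvious reading if an exponent is $\infty$). Hölder's inequality and the hypothesis $\|T_{iy}h\|_{q_0}\le M_0(y)\|h\|_{p_0}$ give $|\Phi(iy)|\le\|g_{iy}\|_{q_0'}\,\|T_{iy}(f_{iy})\|_{q_0}\le M_0(y)$, and symmetrically $|\Phi(1+iy)|\le M_1(y)$ from the hypothesis on $\Re z=1$.

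Finally comes the Phragmén--Lindelöf step, which I expect to be the main obstacle. Enlarging $M_0,M_1$ I may assume them continuous; since $\log M_j(y)\le Ce^{c|y|}$ with $c<\pi$, the Poisson kernel of the strip applied to $\log M_0,\log M_1$ converges and produces a zero-free holomorphic function $\Theta$ on $S$, continuous on $S$, with $|\Theta(iy)|=M_0(y)$, $|\Theta(1+iy)|=M_1(y)$, with $M_t:=\Theta(t)$ finite, and with $\bigl|\log|\Theta(z)|\bigr|\le C'e^{c|\Im z|}$ throughout $S$. Then $H:=\Phi/\Theta$ is holomorphic in the interior of $S$, continuous on $S$, satisfies $|H|\le1$ on $\partial S$ and $\log|H(z)|\le C''e^{c|\Im z|}$ with $c<\pi$. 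Choosing $\gamma\in(c,\pi)$ and setting, for $\eps>0$,
$$H_\eps(z)=H(z)\,\exp\!\bigl(-\eps\,[\,e^{i\gamma(z-1/2)}+e^{-i\gamma(z-1/2)}\,]\bigr),$$
I would use $\cos\bigl(\gamma(\Re z-\tfrac12)\bigr)\ge\cos(\gamma/2)>0$ for $\Re z\in[0,1]$ to see that the extra factor has modulus $\le1$ on $\partial S$ and modulus $\le\exp\!\bigl(-\eps\cos(\gamma/2)\,e^{\gamma|\Im z|}\bigr)$ on all of $S$, uniformly in $\Re z$; as $\gamma>c$, this forces $H_\eps$ to be bounded on $S$ with $|H_\eps|\to0$ as $|\Im z|\to\infty$, so exhausting $S$ by rectangles and applying the maximum modulus principle yields $|H_\eps|\le1$ on $S$. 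Letting $\eps\to0^+$ gives $|H(t)|\le1$, i.e. $|\Phi(t)|\le M_t$; since $\Phi(t)=\int_Y g\,T_t(f)\,d\nu$, taking the supremum over $g$ and invoking homogeneity in $f$ produces $\|T_t f\|_{q_t}\le M_t\|f\|_{p_t}$ for all simple $f$, which is exactly the assertion (with $t$ playing the role of $\theta_t$ in the original normalization). The delicate points — and the only places where the precise threshold $c<\pi(b-a)$ is genuinely exploited — are the convergence of the strip Poisson integral defining $\Theta$ and the fact that, for $\gamma$ strictly below $\pi$, the auxiliary factor decays in $\Im z$ fast enough and uniformly in $\Re z$ to dominate the a priori growth of $\Phi$; were $M_0,M_1$ constants one could omit $\Theta$ and the argument would collapse to the three-lines lemma sharpened by that auxiliary factor.
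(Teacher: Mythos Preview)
The paper does not prove this theorem at all: it is simply recalled ``for the reader's convenience'' and attributed to Stein \cite{Stein2}. There is therefore no ``paper's own proof'' to compare against.

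Your proposal is a faithful and correct reproduction of Stein's original complex-interpolation argument: the reduction to the unit strip, the duality pairing, the analytic family $f_z,g_z$ obtained by complexifying the exponents, the boundary bounds $|\Phi(iy)|\le M_0(y)$ and $|\Phi(1+iy)|\le M_1(y)$, and finally the Phragm\'en--Lindel\"of step carried out via a Poisson-integral majorant $\Theta$ and an auxiliary decay factor $\exp(-\eps[e^{i\gamma(z-1/2)}+e^{-i\gamma(z-1/2)}])$ with $c<\gamma<\pi$. The places you flag as delicate --- convergence of the strip Poisson integral and the uniform decay of the auxiliary factor --- are exactly the points where the hypothesis $c<\pi$ (equivalently $c<\pi(b-a)$ before normalization) is used, and your treatment of them is sound. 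One minor remark: when some of the exponents $p_0,p_1,q_0',q_1'$ equal $\infty$ the definition of $f_z,g_z$ requires the convention $|a_j|^{p_t/\infty}=1$ on the support, which you allude to but might state more explicitly; and the duality formula for $\|h\|_{q_t}$ when $q_t=\infty$ needs the $\sigma$-finiteness you already invoked.
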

\section{ Interpolation of analytic family of operators }
The idea now is to   consider the following family of operators $f\rightarrow S_ z(f)$ given on $L^2(\mathbb{R}^n,w_k(x)dx)$ by
 $$S_ z(f)(x)= \mathcal{F}_k^{-1}\Big(|\xi|^{\gamma_k+n/2-z} J_{\gamma_k+n/2-z}(|\xi|)\mathcal{F}_k(f)(\xi)\Big)(x)$$
where $z$ can be taken to be complex. Hence
in view of (\ref{wave}) and by the fact that
$$J_{1/2}(t)=\left(\frac{2}{\pi}\right)^{1/2}\;\frac{\sin t}{\sqrt{t}}\quad\text{and}\quad J_{-1/2}(t)=\left(\frac{2}{\pi}\right)^{1/2}\;\frac{\cos t}{\sqrt{t}}$$
one can write
\begin{equation}\label{sol}
  u_k(x,t)=\left(\frac{\pi}{2}\right)^{1/2} t\delta(t)S_{ \gamma_k+(n+1)/2}\delta(t^{-1})f(x)+ \left(\frac{\pi}{2}\right)^{1/2} \delta(t)S_{  \gamma_k+(n-1)/2}\delta(t^{-1})g(x)
\end{equation}
 where $\delta(t)$ is the dilation operator
$ \delta(t)f(x)=f(tx).$
\par The operator  $S_z $ turns out to be the analytic continuation  in the parameters $z$ of the  convolution operator
 $$T_z(f) =\Phi_z*_kf,\quad  0\leq \Re(z)<1$$
where
\begin{equation*}
 \Phi_z(x)= \left\{
    \begin{array}{ll}
      \frac{2 ^{z } }{\Gamma(1-z)} (1-|x|^2) ^{-z}, & \hbox{ if $|x|\leq 1$},\\
0, & \hbox{ if $|x| > 1$.}
    \end{array}
  \right.
\end{equation*}
This is a consequence of the following proposition.
\begin{prop}\label{prop}
The Dunkl  transform of $\Phi_z,\;\; 0\leq \Re(z)<1,\;\;$ is given by
\begin{equation}\label{Bes}
  \mathcal{F}_k(\Phi_z)(\xi)=  |\xi|^{z-\gamma_k-n/2} J_{\gamma_k+n/2-z}( |\xi|).
\end{equation}
\end{prop}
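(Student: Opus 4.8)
The plan is to exploit the fact that $\Phi_z$ is radial and compactly supported, so that the computation reduces to a classical one–dimensional Bessel integral. First I would check that $\Phi_z\in L^1(\mathbb{R}^n,w_k(x)dx)$ for $0\le\Re(z)<1$: the function is supported in the closed unit ball, the weight $w_k$ is continuous hence bounded there, and $(1-|x|^2)^{-\Re z}$ is integrable over the ball precisely when $\Re(z)<1$. Thus $\mathcal F_k(\Phi_z)$ is well defined, and since $\Phi_z(x)=\widetilde{\Phi}_z(|x|)$ with $\widetilde{\Phi}_z(s)=\tfrac{2^z}{\Gamma(1-z)}(1-s^2)^{-z}$ on $[0,1]$, I would apply the radial transform formula (\ref{rad}) with $\nu:=\gamma_k+n/2-1$. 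This turns the claim into the evaluation of
\begin{equation*}
 I(z,r)=\int_0^1 (1-s^2)^{-z}\,J_{\nu}(rs)\,s^{\nu+1}\,ds,\qquad r=|\xi|>0,
\end{equation*}
namely showing $I(z,r)=\Gamma(1-z)\,2^{-z}\,r^{z-1}\,J_{\nu+1-z}(r)$, after which (\ref{Bes}) follows by collecting the constant $\tfrac{2^z}{\Gamma(1-z)}$ and the power $|\xi|^{-\nu}$ from (\ref{rad}).

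To compute $I(z,r)$ I would insert the power series $J_\nu(rs)=\sum_{m\ge0}\frac{(-1)^m(rs/2)^{2m+\nu}}{m!\,\Gamma(m+\nu+1)}$ and integrate term by term. After the substitution $u=s^2$ each summand produces a Beta integral, $\int_0^1(1-s^2)^{-z}s^{2m+2\nu+1}\,ds=\tfrac12 B(m+\nu+1,1-z)=\tfrac12\frac{\Gamma(m+\nu+1)\Gamma(1-z)}{\Gamma(m+\nu+2-z)}$, which is meaningful because $\Re(m+\nu+1)>0$ (as $\gamma_k\ge0$ and $n\ge1$) and $\Re(1-z)>0$. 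Summing, the $\Gamma(m+\nu+1)$ factors cancel and the remaining series $\frac{\Gamma(1-z)}{2}\sum_{m\ge0}\frac{(-1)^m(r/2)^{2m+\nu}}{m!\,\Gamma(m+\nu+2-z)}$ is recognized as $\frac{\Gamma(1-z)}{2}(r/2)^{z-1}J_{\nu+1-z}(r)$, which is exactly the asserted closed form (with $\nu+1-z=\gamma_k+n/2-z$). Equivalently, this is Sonine's first finite integral and could simply be quoted.

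The one step that genuinely needs care is the justification of the term-by-term integration. Here I would note that on $[0,1]$ the partial sums of $\sum_m\frac{(rs/2)^{2m+\Re\nu}}{m!\,\Gamma(m+\Re\nu+1)}\,s^{\Re\nu+1}$ are bounded uniformly by a constant multiple of $s^{\Re\nu+1}e^{rs}$, so their product with the integrable function $(1-s^2)^{-\Re z}$ is dominated by a fixed integrable function; dominated convergence (or Fubini--Tonelli applied to the absolutely convergent double series/integral) then applies. I would also observe that the resulting identity is valid directly for every $z$ with $0\le\Re(z)<1$, both sides being holomorphic in $z$ on that strip, so no separate analytic continuation argument is needed inside this range (the continuation of $T_z$ to $S_z$ for larger $\Re(z)$, alluded to before the statement, is a separate matter). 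I expect essentially no other obstacle: once (\ref{rad}) and the convergence bookkeeping are in place, the proof is the classical Bessel computation above.
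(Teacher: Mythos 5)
Your proof is correct and follows essentially the same route as the paper: both reduce the computation to the one--dimensional integral via the radial formula (\ref{rad}) and then invoke Sonine's first finite integral, which the paper simply quotes from Watson. The only difference is that you additionally derive Sonine's identity by term-by-term integration of the Bessel series (with a dominated-convergence justification), which is a correct but optional elaboration of the cited classical fact.
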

\begin{proof}
Since $\Phi_z$ is a radial function that belongs to $L^1(\mathbb{R}^n,w_k(x)dx)$ then from (\ref{rad}) we have
\begin{equation*}
    \mathcal{F}_k(\Phi_z)(\xi)=\frac{2 ^{z } }{\Gamma(1-z)  |\xi|^{\gamma_k+n/2-1}}\int_0^1(1-s^2)^{-z}  J_{\gamma_k+n/2-1}(s|\xi|)\;s^{ \gamma_k+n/2}ds.
\end{equation*}
We  obtain (\ref{Bes}) by applying the well known relationship between Bessel functions (Sonine's first finite integral)
   $$J_{\mu+\nu+1}(t)=\frac{t^{\nu+1}}{2^\nu\Gamma(\nu+1)}\int_0^1J_\mu(st)s^{\mu+1}(1-s^2)^{\nu}ds,$$
for $Re(\mu)>-1,\;Re(\nu)>-1$ and $t>0$
( see for example 12.11 of  \cite{Wat} ).
\end{proof}
 \par Now, to the family $\{S_z\}$ we want to apply Stein's interpolation theorem.
The following theorem is the main result of this section.
\begin{thm}\label{th1} Suppose that $0\leq \alpha\leq  \gamma_k+(n+1)/2$. Then  with a constant $C$
 $$\|S_\alpha(f)\|_{q,k}\leq C\;\|f\|_{p,k}, \qquad \text{for all}\quad f\in S(\mathbb{R}^n)$$
in each of the following cases :
\begin{itemize}
\item[(a)] For $1<p \leqslant 2 \leq  q<\infty$  and
$1/p-1/q \leq   (2\gamma_k+n+1 -2\alpha)/2(n+2\gamma_{k}) $.
 \item[(b) ]  For
 $p= (n+1+2\gamma_{k})/(n+1+2\gamma_{k}-\alpha) $ and $q=p'$
\item[(c)] For \ $1/2\leq \alpha \leq (n+1)/2+\gamma_{k}$ , on the  following cases:
\begin{itemize}
  \item[(i)] $ (n+1+2\gamma_{k})/(n+1+2\gamma_{k}-\alpha)\leq  p \leq2$ and $(n+2\gamma_{k})/q=\alpha -1/p'$.
 \item[(ii)]  $ (n+2\gamma_{k})/(n+2\gamma_k-\alpha+1/2) \leq  p \leq  (n+1+2\gamma_{k})/(n+1+2\gamma_{k}-\alpha)$   and\\
$1/q=\alpha  - (n+2\gamma_{k})/p' .$
\end{itemize}
\end{itemize}
\end{thm}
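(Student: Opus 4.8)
The plan is to establish the claimed bounds at the endpoints of the various regimes and then fill in the intermediate exponents by Stein's interpolation theorem (Theorem~\ref{sstt}), exactly in the spirit of Strichartz's original argument. The basic objects are the analytic family $\{S_z\}$ together with its realization, for $0\le\Re(z)<1$, as convolution with the kernel $\Phi_z$ whose Dunkl transform is computed in Proposition~\ref{prop}. The two "free" endpoints to identify are: (1) the line $\Re(z)=0$, where $|\xi|^{\gamma_k+n/2-z}J_{\gamma_k+n/2-z}(|\xi|)$ is a multiplier whose analysis gives an $L^2\to L^2$ bound via Plancherel (iii), up to a polynomial-in-$\Im z$ factor coming from the gamma functions and the analytic dependence of the Bessel order; and (2) a point with $\Re(z)$ positive (but $<1$), where $\Phi_z\in L^1(\mathbb{R}^n,w_k(x)dx)$ is a bounded radial function, so Young's inequality \eqref{Y} gives $L^1\to L^1$ and $L^\infty\to L^\infty$ bounds. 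Interpolating the $L^2\to L^2$ bound at $\Re z=0$ against an $L^1\to L^\infty$–type bound at a positive value of $\Re z$ yields case (a), after checking that the exponent relation $1/p-1/q\le(2\gamma_k+n+1-2\alpha)/2(n+2\gamma_k)$ is precisely the line one lands on when $z=\alpha$ is written as the appropriate convex combination; the factor $n+2\gamma_k$ is the homogeneous dimension $\int_{|x|\le r}w_k\,dx\sim r^{n+2\gamma_k}$, which controls the $L^1$-norm growth of the dilated kernels.

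The harder input — and the reason the statement splits into cases (b), (c)(i), (c)(ii) with their sharp relations between $p$ and $q$ — is a genuine smoothing/decay estimate for $S_z$ that goes beyond Young's inequality. Here one needs, for $\Re z$ in a suitable range, a bound of the form $\|\Phi_z*_k f\|_{q,k}\le C\|f\|_{p,k}$ obtained by exploiting the precise asymptotics of the Bessel function $J_{\gamma_k+n/2-z}(|\xi|)\sim |\xi|^{-1/2}$ as $|\xi|\to\infty$: this means $\mathcal{F}_k(\Phi_z)(\xi)$ decays like $|\xi|^{\Re z-\gamma_k-(n+1)/2}$, so $\Phi_z$ behaves like a fractional-integration kernel of order $\gamma_k+(n+1)/2-\Re z$ in the Dunkl-homogeneous dimension $n+2\gamma_k$, and one wants the Dunkl analogue of the Hardy–Littlewood–Sobolev inequality together with a Stein–Tomas-type restriction estimate. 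Concretely, the point $p=(n+1+2\gamma_k)/(n+1+2\gamma_k-\alpha)$, $q=p'$ in case (b) is the self-dual point of a Stein–Tomas estimate for the Dunkl sphere, and it should be reachable by combining the Hausdorff–Young inequality \eqref{Hausd}, the radial Dunkl-transform formula \eqref{rad} (which reduces everything to Bessel-function estimates), and a $TT^*$ argument; cases (c)(i) and (c)(ii) are then obtained by interpolating this Stein–Tomas point against the $L^2\to L^2$ estimate and against the $L^1\to L^\infty$ estimate respectively, with the two different linear relations $1/q=\alpha-(n+2\gamma_k)/p'$ and $(n+2\gamma_k)/q=\alpha-1/p'$ reflecting which pair of endpoints bounds the relevant segment.

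In carrying this out I would proceed in the following order. First, verify admissibility of $\{S_z\}$ in the sense required by Theorem~\ref{sstt}: holomorphy and continuity of $z\mapsto\int g\,S_z(f)\,w_k\,dx$ on the relevant strip follow from dominated convergence using the entire dependence of $J_\nu$ on its order and the Plancherel formula, and the growth condition \eqref{log} follows from the gamma-function bounds $|\Gamma(1-z)|^{-1}\lesssim e^{c|\Im z|}$ together with uniform control of the Bessel multiplier. Second, prove the two "cheap" endpoint bounds ($L^2\to L^2$ on $\Re z=0$; $L^1\to L^1$, $L^\infty\to L^\infty$, and $L^1\to L^\infty$ for $\Re z$ in $(0,1)$ via \eqref{Y} and \eqref{trs}–\eqref{trp}). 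Third — the main obstacle — establish the Dunkl Stein–Tomas estimate yielding case (b); this is where the real work lies, and it requires careful stationary-phase/Bessel-asymptotic analysis of $\Phi_z$ and possibly a duality ($TT^*$) reduction, since a naive convolution bound is too weak. Finally, assemble all of (a), (c)(i), (c)(ii) by invoking Theorem~\ref{sstt} along segments in the strip joining the endpoints already established, checking in each case that the stated constraints on $p$ and the stated formula for $1/q$ are exactly the ones traced out by the convex combinations $\alpha=(1-t)a+tb$, $1/p_t=(1-t)/p_0+t/p_1$, $1/q_t=(1-t)/q_0+t/q_1$. The constant $C$ is allowed to depend on $\alpha$ through the gamma factors but not on $f$, which is all that is claimed.
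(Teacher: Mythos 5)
Your outline leaves the decisive step unproven and misplaces the endpoints of the analytic family. You treat case (b) as "the main obstacle," to be settled by a separate Dunkl Stein--Tomas restriction estimate via a $TT^*$/stationary-phase argument; but in the Strichartz scheme that the family $\{S_z\}$ is built for, (b) is the \emph{output} of Stein's interpolation theorem, not an input. The two bounds one actually needs are cheap: on the line $\Re z=0$ the kernel $\Phi_{iy}$ is a bounded radial function (its sup norm is $\lesssim e^{c|y|}$ by the gamma-function bound \eqref{gam}), so $S_{iy}=\Phi_{iy}*_k\cdot$ maps $L^1(w_k)\to L^\infty$ with admissible growth in $y$; on the line $\Re z=\gamma_k+(n+1)/2$ the multiplier $|\xi|^{z-\gamma_k-n/2}J_{\gamma_k+n/2-z}(|\xi|)$ is bounded by $c\,e^{c|y|}$ thanks to \eqref{be} (the Bessel decay $(1+|\xi|)^{-\Re\nu-1/2}$ exactly compensates at this value of $\Re z$), so Plancherel gives $L^2\to L^2$. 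Stein interpolation along the strip $0\le\Re z\le\gamma_k+(n+1)/2$ then yields precisely $p=(n+1+2\gamma_k)/(n+1+2\gamma_k-\alpha)$, $q=p'$. Your placement of the endpoints is the reverse of this and does not work: you put $L^2\to L^2$ at $\Re z=0$ and an "$L^1\to L^\infty$-type" bound at some $0<\Re z<1$, but $\Phi_z$ is \emph{unbounded} near the sphere $|x|=1$ for $\Re z>0$, so no $L^1\to L^\infty$ bound holds there, and in any case a strip of width less than $1$ cannot reach $\alpha$ as large as $\gamma_k+(n+1)/2$. Leaving (b) as an unproved restriction theorem is therefore a genuine gap, not a routine detail.

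Two further points. Case (a) does not need the analytic family at all: by \eqref{be} the multiplier of $S_\alpha$ is pointwise dominated by $c|\xi|^{-d}$ for every $d\le\gamma_k+(n+1)/2-\alpha$, so (a) follows directly from the Hardy--Littlewood/H\"ormander-type multiplier theorem in the Dunkl setting (Theorem \ref{HL}, proved via Lemmas \ref{lem1}--\ref{l3}); your proposed interpolation route for (a) is both unnecessary and, with your endpoints, not viable. Finally, in case (c) the borderline $\alpha=1/2$ is not covered by Riesz--Thorin between (a) and (b) (the exponent $q$ degenerates to $\infty$); it requires the separate observation that $\Phi_{1/2}\in L^{2,\infty}(w_k)$, so that convolution with it is of weak type $(1,2)$ (Lemma \ref{lem5}), followed by Marcinkiewicz interpolation and duality --- a step your proposal does not address.
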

\begin{rem}
   We see that   condition  c)-(ii) can be reduced to c)-(i) by duality.
\end{rem}
  The argument proceeds similarly to the proof of Theorem 1 of  \cite{Sti}. First, we state the following  generalization
  version   of a   Hardy Littlewood Theorem.
 \begin{thm}\label{HL}
Let $0<t<2\gamma_k+n$ and  $m$ be a measurable function such that   some constant $c>0$
\begin{equation}\label{c1}
 |m(\xi)| \leq \frac{c}{|\xi|^{t}}.
\end{equation}
Then the operator  $T_{m}=\mathcal{F}_{k}^{-1}(m \mathcal{F}_{k})$ is bounded   from $L^{p}(\mathbb{R}^n, w_k(x)dx)$ to
 $L^{q}(\mathbb{R}^n, w_k(x)dx)$,  provided that
$$  1<p\leqslant2\leqslant q<\infty \ ,\
\frac{1}{p}-\frac{1}{q}=\frac{t}{2\gamma_k+n}.$$
 \end{thm}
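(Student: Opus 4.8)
The plan is to reduce the theorem to two standard facts: the polynomial growth of the volume of Euclidean balls for the measure $w_k(x)\,dx$, and a Lorentz-space refinement of Hausdorff--Young for the Dunkl transform. Put $N=2\gamma_k+n$. Since $w_k(\lambda x)=\lambda^{2\gamma_k}w_k(x)$, a change of variables gives $\int_{|x|<R}w_k(x)\,dx=c_0R^{N}$ for every $R>0$. Hence $\{\xi:|\xi|^{-t}>\lambda\}=\{|\xi|<\lambda^{-1/t}\}$ has $w_k$-measure $c_0\lambda^{-N/t}$, so the hypotheses $|m(\xi)|\le c|\xi|^{-t}$ and $0<t<N$ say exactly that $m$ lies in the weak Lebesgue space $L^{N/t,\infty}(\mathbb{R}^n,w_k(x)\,dx)$ with quasi-norm controlled by a multiple of $c$. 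Thus it suffices to show that multiplication by an $L^{N/t,\infty}$ symbol produces an operator $T_m$ bounded from $L^p$ to $L^q$ on the stated line.

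For the second ingredient I would first record the two trivial endpoint bounds for $\mathcal{F}_k$: the estimate $\|\mathcal{F}_k f\|_{\infty,k}\le c_k^{-1}\|f\|_{1,k}$ (from $|E_k(x,-i\xi)|\le 1$) and the Plancherel identity $\|\mathcal{F}_k f\|_{2,k}=\|f\|_{2,k}$ from property~(iii). Applying the Marcinkiewicz real interpolation theorem on $(\mathbb{R}^n,w_k(x)\,dx)$ to this pair upgrades Hausdorff--Young to the Paley-type inequality $\|\mathcal{F}_k f\|_{L^{p',p}}\le C\|f\|_{p,k}$ for $1<p\le2$, and, in the same way, to the dual statement $\|\mathcal{F}_k^{-1}h\|_{q,k}\le C\|h\|_{L^{q',q}}$ for $2\le q<\infty$, where $L^{a,b}$ denotes Lorentz spaces with respect to $w_k(x)\,dx$.

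With these in hand the argument is a short chain. For $f\in\mathcal{S}(\mathbb{R}^n)$ write $T_m f=\mathcal{F}_k^{-1}(m\,\mathcal{F}_k f)$, so by the dual Paley inequality $\|T_m f\|_{q,k}\le C\|m\,\mathcal{F}_k f\|_{L^{q',q}}$. By O'Neil's Lorentz-space Hölder inequality, since $m\in L^{N/t,\infty}$ and $\mathcal{F}_k f\in L^{p',p}$, the product lies in $L^{q',p}$ with $\|m\,\mathcal{F}_k f\|_{L^{q',p}}\le C\|m\|_{L^{N/t,\infty}}\|\mathcal{F}_k f\|_{L^{p',p}}$, the first Lorentz exponent being forced by $\tfrac1{q'}=\tfrac tN+\tfrac1{p'}$, that is, by $\tfrac1p-\tfrac1q=\tfrac t{2\gamma_k+n}$. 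Since $1<p\le2\le q<\infty$ one has $p\le q$, hence the continuous inclusion $L^{q',p}\hookrightarrow L^{q',q}$; combining this with the Paley bound $\|\mathcal{F}_k f\|_{L^{p',p}}\le C\|f\|_{p,k}$ gives $\|T_m f\|_{q,k}\le C\|f\|_{p,k}$.

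The one delicate point is the Lorentz bookkeeping in the middle step. A naive version using only plain Hausdorff--Young places all of $m$ against $\mathcal{F}_k f\in L^{p'}$ and lands the product in $L^{q',p'}$; since $q'\le 2\le p'$ this space is strictly larger than $L^{q'}$, so plain Hausdorff--Young cannot close the estimate. It is exactly the refinement $\mathcal{F}_k f\in L^{p',p}$ (second exponent $p$, not $p'$) coming from real interpolation that makes the product land in the smaller space $L^{q',p}\subset L^{q',q}$; verifying that every Lorentz index stays admissible is where the hypotheses $1<p$, $q<\infty$, $p\le2\le q$ and $0<t<N$ are all consumed. Everything else — the volume computation, O'Neil's inequality and the Lorentz inclusions — is routine.
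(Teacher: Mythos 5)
Your proposal is correct, but it packages the argument differently from the paper. The paper follows H\"ormander's original scheme: Lemma \ref{lem1} is a weighted Paley--Pitt type inequality obtained by Marcinkiewicz interpolation between a weak $(1,1)$ bound for $f\mapsto \mathcal{F}_k(f)/\varphi$ with respect to the measure $\varphi^2w_k\,dx$ and Plancherel; Lemma \ref{2} converts it by H\"older into $\|m\,\mathcal{F}_kf\|_{q',k}\le C\|f\|_{p,k}$ with $\varphi=|m|^{b}$; Lemma \ref{l3} then closes with the plain Hausdorff--Young inequality for $\mathcal{F}_k^{-1}$, treating the range $q'<p$ by a separate duality argument, and the Remark supplies the same homogeneity computation you start with, namely that (\ref{c1}) puts $m$ in weak $L^{(n+2\gamma_k)/t}$ for $w_k\,dx$. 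You instead phrase everything in Lorentz-space language: the Lorentz-refined Hausdorff--Young $\mathcal{F}_k:L^p\to L^{p',p}$ and its dual $\mathcal{F}_k^{-1}:L^{q',q}\to L^q$ (both by off-diagonal real interpolation of the $L^1\to L^\infty$ and $L^2\to L^2$ endpoints), O'Neil's H\"older inequality, and the inclusion $L^{q',p}\hookrightarrow L^{q',q}$. The underlying mechanism is identical --- real interpolation of the Dunkl transform plus homogeneity of $w_k$ --- so the two proofs are close cousins; what yours buys is brevity, no case split by duality, and a clean accounting of where each hypothesis enters, at the price of invoking the general Marcinkiewicz--Calder\'on theorem with Lorentz second indices and O'Neil's inequality as black boxes, whereas the paper's version is self-contained with only distribution functions, H\"older and plain Hausdorff--Young. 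One caveat on your closing remark: it is not true in an absolute sense that ``plain Hausdorff--Young cannot close the estimate''; it cannot close it \emph{within your scheme}, where the only information retained about $m$ is its weak-$L^{N/t}$ norm fed into O'Neil, but the paper does close with plain Hausdorff--Young because its Lemma \ref{2} already lands $m\,\mathcal{F}_kf$ exactly in $L^{q'}$.
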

In the case $k=0$ this result is contained in Theorem 1.11  from \cite{Horm}. In the same way   we obtain Theorem \ref{HL}    directly from the following lemmas.
 \begin{lem}\label{lem1}
Let $\varphi\geq 0$   be a measurable function such that   for   some constant $c>0$
 \begin{equation}\label{10}
   \int_{   \varphi(x) \geq s } w_k(x)dx\leqslant\frac{c}{s },\quad \forall\;s>0.
    \end{equation}
 Then for all $1 < p \leq 2$ there exists a constant $C_p>0$ such that
\begin{equation}\label{12}
  \int _{\mathbb{R}^n}\left|\frac{\mathcal{F}_k(f)(\xi)}{\varphi(\xi)}\right|^p\varphi(\xi)^2w_k(\xi)d\xi\leq C_p \|f\|_{p,k}; \quad f\in L^{p}(\mathbb{R}^n, w_k(x)dx).
\end{equation}
 \end{lem}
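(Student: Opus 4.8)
The plan is to recognize \eqref{12} as a weighted Hausdorff–Young / Paley-type inequality and to prove it by interpolation between the trivial $L^2$ endpoint and a restricted-weak-type $L^1$ estimate, exactly as in the classical Paley inequality (cf. the argument behind Theorem 1.11 of \cite{Horm}), but carried out with respect to the measure $w_k(x)\,dx$ and the Dunkl transform $\mathcal F_k$. First I would set up the bilinear/sublinear framework: fix the measure space $(\mathbb R^n, w_k(x)\,dx)$ on the ``frequency side'' but equip it with the new measure $d\mu_\varphi(\xi)=\varphi(\xi)^2 w_k(\xi)\,d\xi$, and consider the operator $U f(\xi)=\mathcal F_k(f)(\xi)/\varphi(\xi)$. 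The claim \eqref{12} is precisely that $U\colon L^p(\mathbb R^n,w_k\,dx)\to L^p(\mathbb R^n,d\mu_\varphi)$ is bounded for $1<p\le 2$.

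For the $L^2$ endpoint ($p=2$) this is immediate: $\int |\mathcal F_k(f)(\xi)/\varphi(\xi)|^2\varphi(\xi)^2 w_k(\xi)\,d\xi=\|\mathcal F_k(f)\|_{2,k}^2=\|f\|_{2,k}^2$ by the Plancherel theorem (property (iii)), with no use of the hypothesis \eqref{10}. The work is at the other end. I would not prove a strong $L^1\to L^1$ bound (which is false); instead I would establish that $U$ is of restricted weak type $(1,1)$ with respect to $d\mu_\varphi$, i.e.\ $\mu_\varphi\bigl(\{\xi: |Uf(\xi)|>\lambda\}\bigr)\le C\lambda^{-1}\|f\|_{1,k}$. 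This is where the hypothesis \eqref{10} enters. The key estimate is the pointwise bound $|\mathcal F_k(f)(\xi)|\le c_k^{-1}\|f\|_{1,k}$ (from $|E_k(x,-i\xi)|\le 1$, since $E_k(x,-i\xi)=\int e^{-i\langle z,\xi\rangle}\,d\nu_x(z)$ with $\nu_x$ a probability measure), so $|Uf(\xi)|\le c_k^{-1}\|f\|_{1,k}/\varphi(\xi)$, whence $\{|Uf|>\lambda\}\subset\{\varphi(\xi)< c_k^{-1}\|f\|_{1,k}/\lambda\}$. Then
\begin{equation*}
\mu_\varphi\bigl(\{|Uf|>\lambda\}\bigr)=\int_{\{\varphi(\xi)<A\}}\varphi(\xi)^2 w_k(\xi)\,d\xi,\qquad A:=c_k^{-1}\|f\|_{1,k}/\lambda,
\end{equation*}
and I would bound the right-hand side using \eqref{10}: writing $\int_{\{\varphi<A\}}\varphi^2\,w_k\,d\xi=2\int_0^A s\,\bigl(\int_{\{s\le\varphi<A\}}w_k\,d\xi\bigr)\,ds\le 2\int_0^A s\cdot \min\bigl(\tfrac{c}{s},\,\text{(total mass on a fixed set)}\bigr)\,ds$; more carefully, using the distribution-function identity $\int_{\{\varphi<A\}}\varphi^2 w_k\,d\xi = A^2 W(\{\varphi<A\}) - 2\int_0^A s\,W(\{\varphi<s\})\,ds$ where $W(E)=\int_E w_k\,d\xi$ — but since we only have the upper bound \eqref{10} the clean route is the layer-cake computation $\int_{\{\varphi<A\}}\varphi^2 w_k\,d\xi\le 2\int_0^A s\,W(\{s\le\varphi\})\,ds\le 2\int_0^A s\cdot\tfrac{c}{s}\,ds=2cA$. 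This gives $\mu_\varphi(\{|Uf|>\lambda\})\le 2c\,c_k^{-1}\|f\|_{1,k}/\lambda$, the desired restricted weak type $(1,1)$ bound.

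Having the strong $(2,2)$ bound and the weak $(1,1)$ bound (both measured with $d\mu_\varphi$ on the image side), I would invoke the Marcinkiewicz interpolation theorem to conclude $U\colon L^p(\mathbb R^n,w_k\,dx)\to L^p(\mathbb R^n,d\mu_\varphi)$ boundedly for every $1<p<2$, with the constant $C_p$ blowing up as $p\to 1$ but finite on the open interval, which is exactly \eqref{12}. The main obstacle, and the only place the structure of the Dunkl setting is really used, is the uniform bound $|E_k(x,-i\xi)|\le 1$ underlying the $L^1\to L^\infty$ estimate for $\mathcal F_k$; everything else is the classical Paley argument transported verbatim to the measure $w_k(x)\,dx$, using only Plancherel (iii) and abstract interpolation. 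One should double-check that $\mathcal F_k(f)$ is well defined $\mu_\varphi$-a.e.\ for $f\in L^p$ with $1<p<2$ (it is, by density of $L^1\cap L^2$ and the two endpoint estimates), so that \eqref{12} makes sense as stated.
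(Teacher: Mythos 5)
Your proposal is correct and follows essentially the same route as the paper's own proof: the same measure $d\mu_\varphi(\xi)=\varphi(\xi)^2w_k(\xi)\,d\xi$, the same operator $f\mapsto \mathcal F_k(f)/\varphi$, the Plancherel endpoint at $p=2$, the weak $(1,1)$ bound obtained from the pointwise estimate $|\mathcal F_k(f)|\lesssim\|f\|_{1,k}$ together with the layer-cake computation using (\ref{10}), and finally the Marcinkiewicz interpolation theorem. The only cosmetic discrepancy is that what you label a ``restricted weak type'' estimate is in fact the full weak type $(1,1)$ bound (you prove it for arbitrary $f\in L^1$), exactly as in the paper.
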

\begin{proof}
Put $d\mu_k(\xi)=\varphi(\xi)^2w_k(\xi)d\xi$ and $T$ the operator  $f\rightarrow T(f)=\mathcal{F}_k(f) /\varphi $. We have
 \begin{eqnarray*}
 \mu_k\Big\{\xi; \;|T(f)(\xi)|\geq s\Big\}&\leq& \mu_k\Big\{\xi;\;  \varphi(\xi)\leq \|f\|_{1,k} /s\Big\}
\\&=& \int_{ \varphi(\xi)\leq \,\|f\|_{1,k} /s  }\varphi(\xi)^2w_k(\xi)d\xi
\\&=& 2\int_{  0\,\leq\, t\,\leq \,\varphi(\xi)\,\leq\, \|f\|_{1,k} /s  } w_k(\xi) tdt d\xi
\\&\leq & 2\int_{ 0  }^{  \|f\|_{1,k} /s} t\left\{\int_{ \varphi(\xi) \,\geq \, t}  w_k(\xi)d\xi\right\} dt
\\&\leq &\frac{2c \|f\|_{1,k}}{s}.
\end{eqnarray*}
This means that the operator $T$  is bounded from $L^{1}(\mathbb{R}^n, w_k(x)dx)$ into weak space $L^{1,\infty}(\mathbb{R}^n, \mu_k(x)dx)$ .
On the other  hand from Plancherel Theorem
\begin{eqnarray*}
 \mu_k\{\xi;\; |T(f)(\xi)|\geq s\}&\leq&   \frac{\|f\|_{2,k}^2}{s^2}
\end{eqnarray*}
  Then Lemma \ref{lem1} follows  from  Marcinkiewicz  interpolation Theorem.
  \end{proof}
\begin{lem}\label{2}
 If $\varphi$ satisfies (\ref{10}) and $1 <p<r<p' < \infty$, then we have
$$ \left(\int_{\mathbb{R}^n}\left|\mathcal{F}_k(f)(\xi)(\varphi(\xi))^{(1/r-1/p')}\right|^rw_k(\xi)d\xi\right)^{1/r}\leq  C_p\|f\|_{p,k};\quad f\in L^p(\mathbb{R}^n,w_k(x)dx).$$
\end{lem}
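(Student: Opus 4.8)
The plan is to derive Lemma~\ref{2} by complex interpolation between the weighted $L^p$ bound of Lemma~\ref{lem1} and the Hausdorff--Young inequality \eqref{Hausd}, exactly in the spirit of the proof of the sharp Hardy--Littlewood estimate in \cite{Sti}. First I would reformulate Lemma~\ref{lem1}. Since $(2-p)/p=1/p-1/p'$, inequality \eqref{12} is the same as
$$\big\|\mathcal{F}_k(f)\,\varphi^{\,1/p-1/p'}\big\|_{p,k}\le C_p^{1/p}\,\|f\|_{p,k},$$
while \eqref{Hausd} (using that $p<p'$ forces $1<p\le2$) reads $\|\mathcal{F}_k(f)\|_{p',k}\le\|f\|_{p,k}$. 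These are two estimates for one and the same linear operator $\mathcal{F}_k$ on the fixed domain $L^p(\mathbb{R}^n,w_k\,dx)$, whose targets $L^p(\varphi^{2-p}w_k\,d\xi)$ and $L^{p'}(w_k\,d\xi)$ differ only by a power of the density $\varphi$.

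Next I would set up the analytic family adapted to Theorem~\ref{sstt}. On the strip $S=\{z:0\le\Re z\le1\}$ put
$$T_z(f)(\xi)=\varphi(\xi)^{\,z(1/p-1/p')}\,\mathcal{F}_k(f)(\xi),$$
with the convention that $\varphi(\xi)^{\,z(1/p-1/p')}=0$ whenever $\varphi(\xi)=0$. For $\varphi(\xi)>0$ the map $z\mapsto\varphi(\xi)^{z(1/p-1/p')}=e^{z(1/p-1/p')\log\varphi(\xi)}$ is entire, and $|\varphi(\xi)^{z(1/p-1/p')}|=\varphi(\xi)^{\Re(z)(1/p-1/p')}$ depends only on $\Re z$; moreover $\Re(z)(1/p-1/p')\in[0,1/p-1/p']$ with $1/p-1/p'<1$. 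Since \eqref{10} yields $\int_{\operatorname{supp}g}\varphi^{\,t}w_k<\infty$ for every $0\le t<1$ (split the distribution function of $\varphi$ over $\operatorname{supp}g$, which has finite $w_k$-measure), one checks by the usual differentiation-under-the-integral argument that for simple $f$ and $g$ the function $z\mapsto\int g\,T_z(f)\,w_k\,d\xi$ is holomorphic in the interior of $S$, continuous on $S$, and bounded there; hence the family $\{T_z\}$ is admissible. On the line $\Re z=0$ one has $|T_{iy}(f)|\le|\mathcal{F}_k(f)|$, so $\|T_{iy}(f)\|_{p',k}\le\|f\|_{p,k}$ by \eqref{Hausd}; on the line $\Re z=1$ one has $|T_{1+iy}(f)|=\varphi^{\,1/p-1/p'}|\mathcal{F}_k(f)|$, so $\|T_{1+iy}(f)\|_{p,k}\le C_p^{1/p}\|f\|_{p,k}$ by the reformulation of Lemma~\ref{lem1}; in both cases the constants are independent of $y$.

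Finally, Theorem~\ref{sstt} applied with $a=0$, $b=1$, $(p_0,q_0)=(p,p')$, $(p_1,q_1)=(p,p)$ at
$$\theta=\frac{1/r-1/p'}{1/p-1/p'}\in(0,1)\qquad(\text{legitimate since }p<r<p'\Rightarrow 1/p'<1/r<1/p)$$
gives $\|T_\theta(f)\|_{r,k}\le M_\theta\|f\|_{p,k}$, because $1/q_\theta=(1-\theta)/p'+\theta/p=1/r$. As $T_\theta(f)=\varphi^{\,\theta(1/p-1/p')}\mathcal{F}_k(f)$ and $\theta(1/p-1/p')=1/r-1/p'$, this is exactly the inequality of Lemma~\ref{2} for simple $f$, and it extends to all $f\in L^p(\mathbb{R}^n,w_k\,dx)$ by density. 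The main obstacle is purely technical: $\varphi$ is only assumed nonnegative and measurable, so it may vanish or be unbounded, and one must make sure the analytic family and the functional $F(z)$ are well defined and admissible; this is precisely what the reduction to simple functions together with the finiteness $\int_{\operatorname{supp}g}\varphi^{\,t}w_k<\infty$ $(t<1)$ takes care of. Beyond that, the computation is the same bookkeeping as in Theorem~1 of \cite{Sti}; alternatively, one may phrase the whole step as the Stein--Weiss interpolation theorem (interpolation with change of measures) applied directly to the two endpoint estimates above.
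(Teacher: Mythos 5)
Your proposal is correct, but it takes a genuinely different route from the paper. The paper proves Lemma \ref{2} in three lines by a direct H\"older argument: with $a=(p'-p)/(p'-r)$ and its conjugate $a'$ (so that $p/a+p'/a'=r$, $(1-r/p')a=2-p$, $(r-p/a)a'=p'$), it splits the integrand $|\mathcal{F}_k(f)|^{r}\varphi^{\,1-r/p'}$ into a factor controlled by Lemma \ref{lem1} (i.e.\ by $\int|\mathcal{F}_k(f)|^{p}\varphi^{\,2-p}w_k$) and a factor controlled by the Hausdorff--Young inequality (\ref{Hausd}), then applies H\"older with exponents $a,a'$ --- in other words, it performs the change-of-weight interpolation by hand. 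You use exactly the same two endpoint facts (your reformulation $\|\varphi^{\,1/p-1/p'}\mathcal{F}_k(f)\|_{p,k}\lesssim\|f\|_{p,k}$ of Lemma \ref{lem1}, reading its right-hand side as $\|f\|_{p,k}^p$ as the paper's own computation implicitly does, plus Hausdorff--Young), but you reach the conclusion through Stein's interpolation theorem applied to the analytic family $T_z(f)=\varphi^{\,z(1/p-1/p')}\mathcal{F}_k(f)$, equivalently Stein--Weiss interpolation with change of measures; your exponent bookkeeping ($\theta=(1/r-1/p')/(1/p-1/p')$, $1/q_\theta=1/r$) is right, and your admissibility check via $\int_{\operatorname{supp} g}\varphi^{\,t}w_k<\infty$ for $0\le t<1$, which follows from (\ref{10}) and $1/p-1/p'<1$, together with the uniform boundedness of $F$ on the strip, does close the technical gaps. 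What each approach buys: the paper's H\"older computation is shorter and avoids all analyticity and measurability technicalities, since the operator ($\mathcal{F}_k$) and the source exponent ($p$) are fixed and only the target weight varies --- in that situation the complex-interpolation machinery collapses precisely to the H\"older step; your version is heavier but more systematic, and makes transparent that the lemma is an instance of interpolation with change of measures, which would generalize if the two endpoint estimates involved genuinely different operators or source spaces.
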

\begin{proof}
Put $a=(p'-p)/(p'-r)$, and $a'$ it's conjugate,  we have $p/a+p'/a'=r$, $(1-r/p')a=2-p$ and $(r-p/a)a'=p'$. Then
Using  H\"{o}lder's inequality, (\ref{12}) and  the Hausdorff-Young inequality (\ref{Hausd}),
\begin{eqnarray*}
     &&\left(\int_{\mathbb{R}^n} |\mathcal{F}_k(f)(\xi)|^{\;r}|\varphi(\xi)|^{\;(1-r/p')} w_k(\xi)d\xi\right)^{1/r}
   \\ &&\leq \left(\int_{\mathbb{R}^n} |\mathcal{F}_k(f)(\xi)|^{p}|\varphi(\xi)|^{\;2-p} w_k(\xi)d\xi\right)^{1/ra} \left(\int_{\mathbb{R}^n} |\mathcal{F}_k(f)(\xi)|^{p'}  w_k(\xi)d\xi\right)^{1/ra'}
   \\&& \leq C_p \|f\|_{p,k},
\end{eqnarray*}
which is the desired statement.
  \end{proof}
\begin{lem}\label{l3}
Let  $m$ be a measurable function  and    $1<b<\infty$, such that
$$  \int_{  |m(x)|\geq s } w_k(x)dx \leq \frac{c}{s^{b}},\quad \forall\;s>0.$$
Then   the operator $T_{m}=\mathcal{F}_{k}^{-1}(m \mathcal{F}_{k})$ is  bounded    from $L^{p}(\mathbb{R}^n, w_k(x)dx)$ to
 $L^{q}(\mathbb{R}^n, w_k(x)dx)$,  provided that
$$ 1<p\leqslant2\leqslant q<\infty \quad \text{and}\quad \frac{1}{p}-\frac{1}{q}=\frac{1}{b}\;.$$
\end{lem}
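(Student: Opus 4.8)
The plan is to derive this estimate from Lemma~\ref{2} by a duality argument, the crucial point being the elementary observation that the weight $\varphi:=|m|^{b}$ satisfies hypothesis (\ref{10}): since $\{\varphi\geq s\}=\{|m|\geq s^{1/b}\}$, the assumption on $m$ gives
$$\int_{\{\varphi\geq s\}}w_k(x)\,dx\;\leq\;\frac{c}{(s^{1/b})^{b}}\;=\;\frac{c}{s},\qquad s>0 .$$
Before doing anything I would replace $m$ by bounded, compactly supported truncations $m_\varepsilon$ of $m$ (these converge to $m$, satisfy the same hypothesis with the same constant $c$, and make $T_{m_\varepsilon}$ trivially bounded on $L^2(\mathbb{R}^n,w_k(x)dx)$), so that by Fatou's lemma it suffices to prove the claimed inequality for each $T_{m_\varepsilon}$ with a constant independent of $\varepsilon$. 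Thus we may assume $m$ bounded with compact support, and then $T_m f\in L^q(\mathbb{R}^n,w_k(x)dx)$ already holds for $f\in\mathcal{S}(\mathbb{R}^n)$.

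Fix $f\in\mathcal{S}(\mathbb{R}^n)$ and $g\in\mathcal{S}(\mathbb{R}^n)$ with $\|g\|_{q',k}\leq1$. Using Plancherel's theorem, the unitarity of $\mathcal{F}_k$ on $L^2(\mathbb{R}^n,w_k(x)dx)$, and $\mathcal{F}_k(T_m f)=m\,\mathcal{F}_k(f)$, one has
$$\Big|\int_{\mathbb{R}^n}T_m(f)(x)\,\overline{g(x)}\,w_k(x)\,dx\Big|=\Big|\int_{\mathbb{R}^n}m(\xi)\,\mathcal{F}_k(f)(\xi)\,\overline{\mathcal{F}_k(g)(\xi)}\,w_k(\xi)\,d\xi\Big|\leq\int_{\mathbb{R}^n}|m|\,|\mathcal{F}_k f|\,|\mathcal{F}_k g|\,w_k\,d\xi .$$
Then I would split $|m|=|m|^{\alpha}|m|^{1-\alpha}$, with a parameter $\alpha\in[0,1]$ to be chosen, and use Hölder's inequality with conjugate exponents $r,r'$:
$$\int_{\mathbb{R}^n}|m|\,|\mathcal{F}_k f|\,|\mathcal{F}_k g|\,w_k\;\leq\;\Big(\int_{\mathbb{R}^n}|\mathcal{F}_k f|^{r}\,|m|^{\alpha r}\,w_k\Big)^{1/r}\Big(\int_{\mathbb{R}^n}|\mathcal{F}_k g|^{r'}\,|m|^{(1-\alpha)r'}\,w_k\Big)^{1/r'}.$$

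The core of the argument is to bound the two factors by Lemma~\ref{2} with the weight $\varphi=|m|^{b}$: the first factor with exponents $p,r$, which gives the bound $C\|f\|_{p,k}$ provided $\alpha r=b(1-r/p')$; the second factor with $g$ in place of $f$ and exponents $q',r'$, which gives $C\|g\|_{q',k}$ provided $(1-\alpha)r'=b(1-r'/q)$. Rewriting these requirements as $\alpha=b(1/r-1/p')$ and $1-\alpha=b(1/r'-1/q)$ and adding them, the left side is $1$ while the right side is $b\bigl(1-1/p'-1/q\bigr)=b\bigl(1/p-1/q\bigr)$; hence both can be satisfied simultaneously exactly because $1/p-1/q=1/b$, and they then determine $\alpha$. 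Taking the supremum over all such $g$ yields $\|T_m(f)\|_{q,k}\leq C\|f\|_{p,k}$, and letting $\varepsilon\to0$ removes the truncation.

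The step I expect to need the most care is checking that $r$ can be chosen inside the admissibility ranges $1<p<r<p'$ and $1<q'<r'<q$ required by the two applications of Lemma~\ref{2}. A short computation turns these into $1/r\in(1/p',1/p)\cap(1/q,1/q')$, an interval that is nonempty precisely when $1<p<2<q<\infty$ (and then automatically $\alpha\in(0,1)$). The two endpoint cases have to be dealt with separately: when $p=2$ I would take $r=2$ and $\alpha=0$, so that the first factor reduces by Plancherel to $\|\mathcal{F}_k f\|_{2,k}=\|f\|_{2,k}$ while the second still falls under Lemma~\ref{2} since $q>2$; the case $q=2$ then follows by duality (replacing $m$ by $\overline m$), and $p=q=2$ cannot occur because $b<\infty$. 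Apart from this exponent bookkeeping and the routine truncation/limiting step, nothing further is needed: the whole proof rests on the single remark that $|m|^{b}$ satisfies (\ref{10}), after which the relation $1/p-1/q=1/b$ is exactly what makes the two uses of Lemma~\ref{2} combine.
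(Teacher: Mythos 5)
Your proposal is correct, but it follows a genuinely different route from the paper. The paper also starts from the key observation that $\varphi=|m|^{b}$ satisfies (\ref{10}), but then applies Lemma \ref{2} only once, with $r=q'$, to get $\|m\,\mathcal{F}_k(f)\|_{q',k}\leq C_p\|f\|_{p,k}$, and concludes directly by the Hausdorff--Young inequality (\ref{Hausd}) applied to $\mathcal{F}_k^{-1}$, i.e.\ $\|T_m(f)\|_{q,k}\leq\|m\,\mathcal{F}_k(f)\|_{q',k}$; since this use of Lemma \ref{2} requires $p<q'<p'$, the complementary range $q'<p$ is handled by passing to the adjoint $T_{\overline m}$ and dualizing. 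You instead dualize from the outset, reduce via Plancherel to the trilinear form $\int |m|\,|\mathcal{F}_kf|\,|\mathcal{F}_kg|\,w_k$, split $|m|=|m|^{\alpha}|m|^{1-\alpha}$, and apply Lemma \ref{2} twice (to $f$ with exponents $p,r$ and to $g$ with exponents $q',r'$), the relation $1/p-1/q=1/b$ being exactly the solvability condition for $\alpha$ and $r$; your exponent bookkeeping, the admissible range $1<p<2<q$, and the endpoint treatments $p=2$ (Plancherel on the $f$-factor) and $q=2$ (duality) are all correct. What each approach buys: the paper's argument is shorter, and because Hausdorff--Young makes sense of $\mathcal{F}_k^{-1}$ on $L^{q'}$ it gives meaning to $T_mf$ with no truncation at all, at the price of the case split $p\leq q'$ versus $q'<p$; your symmetric bilinear argument avoids that case split (replacing it by the easier endpoints $p=2$, $q=2$) and uses only Plancherel on the output side, at the price of the truncation/limiting step. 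On that step, one more line would help: the uniform bound for $T_{m_\varepsilon}$ transfers to $T_m$ most cleanly by noting $m_\varepsilon\mathcal{F}_kf\to m\,\mathcal{F}_kf$ dominatedly in $L^{q',k}$ (which is available, e.g.\ from Lemma \ref{2} applied to $m$ itself) so that $T_{m_\varepsilon}f\to T_mf$ in $L^{q,k}$ by Hausdorff--Young, or by extracting an a.e.\ convergent subsequence before invoking Fatou; as written, ``by Fatou's lemma'' presupposes some such convergence.
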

\begin{proof}
We may   assume first that $p\leq q'$ and we let $\varphi=|m|^b$. Since $\varphi$ satisfies (\ref{10}), then  using Lemma \ref{2} with $r=q'$ and the fact that $1/p-1/q=1/q'-1/p'=1/b$, we obtain, for $f\in L^p(\mathbb{R}^n,w_k(x)dx),$
  $$\|m \mathcal{F}_{k}(f)\|_{q',k}\leq C_p\|f\|_{p,k}.$$
 Therefore Hausdorff-Young inequality implies
$$\|T_{m}(f)\|_{q,k}\leq \|m \mathcal{F}_{k}(f)\|_{q',k}\leq C_p\|f\|_{p,k}.$$
When $q'<p=(p')'$, we can  apply  the similar argument   to the adjoint operator $T^*_{m} = T_{\overline{m} }$,
since $ 1<q'\leqslant2\leqslant p'<\infty$ and  $1/q'- 1/p' =1/b$. Hence by duality it follows that
$$\|T_{m}(f)\|_{q,k}   \leq C_p\|f\|_{p,k}.$$
This concludes the proof of Lemma \ref{l3}.
\end{proof}
\begin{rem}
From Lemma  \ref{l3} we  obtain the statement of Theorem \ref{HL}, since
$$  \int_{  |m(x)|\geq s } w_k(x)dx \leq  \int_{  |x|\leq s^{-1/t} } w_k(x)dx\leq  \frac{c}{s^{ (2\gamma_k+n)/t}}. $$
\end{rem}
The second  fact we shall also require in proving the theorem \ref{th1} is the following
 \begin{lem}\label{lem5}
  Let $\phi\in   L^{1}(\mathbb{R}^n,w_k(x)dx)$  be a  radial function. If $\phi\in   L^{r,\infty}(\mathbb{R}^n,w_k(x)dx)$ for some  $1<r<\infty$ then the
Dunkl-convolution operator with  $\phi$ is of weak type (1,r).
\end{lem}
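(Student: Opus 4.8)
The plan is to mimic the classical proof that convolution with a function in weak $L^r$ maps $L^1$ into $L^{r,\infty}$, replacing Euclidean translation by the Dunkl translation and exploiting that, for a radial kernel, the latter is represented by the \emph{nonnegative} measures $\nu_x$ through formula (\ref{trad}). Fix $f\in L^1(\mathbb{R}^n,w_k(x)dx)$ with $f\not\equiv0$ and $\lambda>0$. For a cut-off height $\delta>0$ to be chosen at the end, split the radial kernel as $\phi=\phi^{\delta}+\phi_{\delta}$, where $\phi^{\delta}=\phi\,\mathbf 1_{\{|\phi|>\delta\}}$ and $\phi_{\delta}=\phi\,\mathbf 1_{\{|\phi|\le\delta\}}$; both summands are radial, $\phi_{\delta}$ is bounded by $\delta$, and $\phi^{\delta}\in L^1(\mathbb{R}^n,w_k(x)dx)$. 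Since $f*_k\phi=f*_k\phi^{\delta}+f*_k\phi_{\delta}$,
$$w_k\bigl(\{|f*_k\phi|>\lambda\}\bigr)\le w_k\bigl(\{|f*_k\phi^{\delta}|>\tfrac\lambda2\}\bigr)+w_k\bigl(\{|f*_k\phi_{\delta}|>\tfrac\lambda2\}\bigr),$$
and I would bound the two pieces separately.

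For the bounded piece I would write $f*_k\phi_{\delta}=\phi_{\delta}*_kf$ in its integral form and use the contraction property (\ref{trp}) with $p=\infty$: $|\phi_{\delta}*_kf(x)|\le\|\tau_x(\phi_{\delta})\|_{\infty,k}\,\|f\|_{1,k}\le\|\phi_{\delta}\|_{\infty,k}\,\|f\|_{1,k}\le\delta\,\|f\|_{1,k}$. Choosing $\delta=\lambda/(2\|f\|_{1,k})$ then forces $\|f*_k\phi_{\delta}\|_{\infty,k}\le\lambda/2$, so the second term above vanishes. For the $L^1$-piece I would first record the pointwise domination $|f*_k\phi^{\delta}|\le|f|*_k|\phi^{\delta}|$: writing $f*_k\phi^{\delta}=\phi^{\delta}*_kf$ and applying (\ref{trad}) to the radial function $\phi^{\delta}$ gives $|\tau_x(\phi^{\delta})(-y)|\le\tau_x(|\phi^{\delta}|)(-y)$ because the $\nu_x$ are nonnegative. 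Then Young's inequality (\ref{Y}) — which extends from bounded to arbitrary radial kernels in $L^1(\mathbb{R}^n,w_k(x)dx)$ by truncating $|\phi^{\delta}|$ and passing to the limit by monotone convergence — yields $\|f*_k\phi^{\delta}\|_{1,k}\le\|\phi^{\delta}\|_{1,k}\,\|f\|_{1,k}$, so that Chebyshev's inequality gives $w_k(\{|f*_k\phi^{\delta}|>\lambda/2\})\le(2/\lambda)\,\|\phi^{\delta}\|_{1,k}\,\|f\|_{1,k}$.

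It remains to control $\|\phi^{\delta}\|_{1,k}$ by the weak-$L^r$ norm. With $d_\phi(s)=w_k(\{|\phi|>s\})\le\|\phi\|_{L^{r,\infty}}^{\,r}\,s^{-r}$, the layer-cake formula applied to $\phi^{\delta}$ gives $\|\phi^{\delta}\|_{1,k}=\delta\,d_\phi(\delta)+\int_\delta^{\infty}d_\phi(s)\,ds\le\frac{r}{r-1}\,\|\phi\|_{L^{r,\infty}}^{\,r}\,\delta^{1-r}$, which is where the hypothesis $r>1$ is used. Substituting $\delta=\lambda/(2\|f\|_{1,k})$ into the sum of the two bounds yields $w_k(\{|f*_k\phi|>\lambda\})\le\frac{r}{r-1}\,2^{r}\,\|\phi\|_{L^{r,\infty}}^{\,r}\,\|f\|_{1,k}^{\,r}\,\lambda^{-r}$, i.e. $\sup_{\lambda>0}\lambda\,w_k(\{|f*_k\phi|>\lambda\})^{1/r}\le 2\bigl(\tfrac{r}{r-1}\bigr)^{1/r}\,\|\phi\|_{L^{r,\infty}}\,\|f\|_{1,k}$, which is precisely the weak-type $(1,r)$ estimate claimed. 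The only place where some care is required — and the step I expect to be the main obstacle — is the $L^1\to L^1$ bound for convolution with the radial, generally unbounded kernel $\phi^{\delta}$: in the Euclidean case this rests on the positivity of ordinary translation, and here its substitute is the positivity of the representing measures $\nu_x$ in (\ref{trad}), which is exactly what legitimizes both the pointwise bound $|\tau_x(\phi^{\delta})|\le\tau_x(|\phi^{\delta}|)$ and the extension of (\ref{Y}) beyond bounded kernels.
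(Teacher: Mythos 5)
Your proposal is correct and follows essentially the same route as the paper: the same height truncation of $\phi$ at $\delta=\lambda/(2\|f\|_{1,k})$, the sup-bound on the bounded piece via (\ref{trp}), the layer-cake estimate $\|\phi^{\delta}\|_{1,k}\lesssim\delta^{1-r}$ from the weak-$L^r$ hypothesis, and Chebyshev on the $L^1$ piece. Your extra care in justifying the $L^1$--$L^1$ Young inequality for the unbounded radial piece (positivity of $\nu_x$ plus truncation and monotone convergence) is a welcome refinement of a step the paper applies without comment, but it does not change the argument.
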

\begin{proof}
Let us recall that $\phi$ is in  $  L^{r,\infty}(\mathbb{R}^n,w_k(x)dx)$ if there exists a constant $c>0$ such that
 $$ \alpha _{\phi}(t)=\int_{|\phi(x)|>t}w_k(x)dx \leq \frac{c}{t^r}.$$
Let $\lambda>0$, we decompose $\phi=\phi_1+\phi_2$ where
  $$\phi_1=\left\{
    \begin{array}{ll}
     \phi, & \hbox{if $|\phi|>\lambda$,}\\
0& \hbox{if $|\phi|\leq \lambda$,}
    \end{array}
  \right.,\quad \text{and}\quad\;\phi_2= \phi-\phi_1.$$
So, we have
$$\alpha_{\phi_1}(t)=\left\{
    \begin{array}{ll}
     \alpha_{\phi}(t), & \hbox{if $t>\lambda$,}\\
\alpha_{\phi} (\lambda)& \hbox{if $t\leq \lambda$,}
    \end{array}
  \right. $$
  and
\begin{eqnarray*}
 \int_{\mathbb{R}^n}|\phi_1(x)|w_k(x)dx=\int_0^\infty\alpha_{\phi_1}(t)dt&=&\lambda \alpha_\phi(\lambda)+\int_{\lambda}^\infty \alpha_\phi(t)dt
  \leq  c \lambda^{1-r}.
\end{eqnarray*}
Then, using (\ref{trp}) we obtain for   $f\in L^{1}(\mathbb{R}^n,w_k(x)dx) $,
\begin{eqnarray}
 \|f*_k\phi_1\|_{1,k}\leq \| \phi_1\|_{1,k} \|f\|_{1,k}\leq c_0 \lambda^{1-r}\| f\|_{1,k} \label{f1}
\end{eqnarray}
and
\begin{eqnarray}
 \|f*_k\phi_2\|_{\infty}\leq \| \phi_2\|_{\infty} \|f\|_{1,k}\leq   \lambda \| f\|_{1,k}  \label{f2}
\end{eqnarray}
Now let $s>0$ and  $\lambda=s/(2\| f\|_{1,k})$. In view of (\ref{f2})
$$\int_{\{|f*_k\phi_2(x)|>s/2\}}w_k(x)dx=0$$
Thus by Chebyshev inequality and (\ref{f1}),
 \begin{eqnarray*}
 \int_{\{|f*_k\phi(x) |>s \}}w_k(x)dx &\leq& \int_{\{|f*_k\phi_1(x)|>s/2\}}w_k(x)dx+\int_{\{|f*_k\phi_2(x)|>s/2\}}w_k(x)dx\\
&\leq& 2\frac{\|f*_k\phi_1\|_{1,k}}{s}\leq c \left(\frac{\| f\|_{1,k}}{s}\right)^r,
\end{eqnarray*}
 which is the desired estimate.
\end{proof}
We are now in a position  to prove   Theorem \ref{HL}.\\
 \textbf{  Proof of  Theorem \ref{th1}.}
 Concerning Bessel function we have first to note   the following   facts.
\begin{equation}\label{ib}
 J_\nu(t)= \frac{(t/2)^\nu }{\sqrt{\pi}\;\Gamma(\nu+1/2)}\int_{-1}^1(1-u^2)^{\nu-1/2}e^{itu}du, \quad Re(\nu)>-1/2,\; t>0
\end{equation}
and
\begin{equation}\label{be}
| t^{-(\eta+i\zeta)}J_{\eta+i\zeta}(t)|\leq c_{\eta}e^{c|\zeta|}(1+t)^{-\eta-\frac{1}{2}}, \quad \eta,\; \zeta \in  \mathbb{R}\;\;\text{and}\;\;t>0 .
\end{equation}
This behavior of  Bessel function was  mentioned in \cite{Sti}. In view of  (\ref{be}) we have
$$||\xi|^{-(\gamma_k+n/2-\alpha)}J_{\gamma_k+n/2-\alpha}(|\xi|)| \leq  \frac{c}{|\xi|^{d}}; \qquad \xi\neq 0$$
for all $d \leq  \gamma_{k}+(n+1)/2-\alpha$. We thus obtain (a)  by  applying Theorem \ref{HL} to the operator $S_\alpha$.
 \par To prove $(b)$, We may apply
  Stein's  Interpolation  Theorem  to the analytic family of the operators $S_z$, for $0\leq Re(z)\leq \gamma_k+(n+1)/2$.
 Indeed, let $f$ and $g $ be a  simple functions and
   $$F(z)=\int_{\mathbb{R}^n}S_z(f)(x)g(x)w_k(x)dx,  \quad 0\leq Re(z)\leq \gamma_k+(n+1)/2.$$
By applying  the Cauchy-Schwarz Inequality and  Plancherel Theorem
the integral converges absolutely. Moreover $F$ can be  written as
$$F(z)= \int_{\mathbb{R}^n} |\xi|^{-(\gamma_k+n/2-z)}\mathcal{J}_{\gamma_k+n/2-z}(|\xi|) \mathcal{F}_k(f)(\xi) \mathcal{F}_k(\overline{g})(\xi)w_k(\xi)d\xi.$$
and so in view of  (\ref{ib}) and (\ref{be})  we see that  $F$ is analytic in $\{z\in \mathbb{C},\; 0<Re(z)<\gamma_k+(n+1)/2\}$
 and continuous in $\{z\in \mathbb{C},\; 0\leq Re(z)\leq\gamma_k+(n+1)/2\}$. Either because of  (\ref{be})  the condition (\ref{log}) holds.
Let us now consider the two  boundary  lines $Re(z)=0$ and  $Re(z)=\gamma_k+(n+1)/2$.  Using  (\ref{conv}) and the fact  that
\begin{equation}\label{gam}
 |\Gamma(1-iy)|^{-1}= \left(\frac{\pi\sinh y}{y}\right)^{1/2}\leq c e^{|y|/2}
\end{equation}
  we estimate $S _{ iy}(f)$ by
   $$\|S _{ iy}(f)\|_{\infty,k} \leq c\,e^{c \,|y|}\|f\|_{1,k}.$$
However, using (\ref{be})  and  Plancherel Theorem   we have the estimate
 \begin{eqnarray*}
 \|S _{\gamma_k+(n+1)/2+iy}(f)\|_{2,k}&\leq &c\,e^{c \,|y|}\|f\|_{2,k}.
 \end{eqnarray*}
   Therefore the application of Stein's interpolation theorem yields (b). To establish  estimate  (c) we proceed as follows: when $\alpha>1/2$ then  we obtain (c)
 from (a) and (b)  by applying  the Riesz-Thorin interpolation theorem   to the couples $(L^{p_1}, L^{q_1})$ and $(L^{p_2}, L^{q_2})$ in the two cases:
\begin{eqnarray*}
\left\{
  \begin{array}{ll}
     p_1=2, \quad q_1= (n+2\gamma_k)/(\alpha-1/2), \\
p_2=(n+2\gamma_k+1)/(n+2\gamma_k+1)-\alpha, \quad q_2=(n+2\gamma_k+1)/\alpha
  \end{array}
\right.
    \end{eqnarray*}
 and
\begin{eqnarray*}
\left\{
  \begin{array}{ll}
  p_1= (n+2\gamma_k)/(n+2\gamma_k+1/2-\alpha),\quad q_1 =2\\
p_2=(n+2\gamma_k+1)/(n+2\gamma_k+1)-\alpha,\quad q_2= (n+2\gamma_k+1)/\alpha.
  \end{array}
\right.
\end{eqnarray*}
 When $\alpha=1/2$, one can see   that  $\varphi_{1/2}\in L^{2,\infty}(\mathbb{R}^n,w_k(x)dx)$ and by
 Lemma \ref{lem5} the operator $S_{1/2}$ is of weak type $(1,2)$. Thus, according to the estimates of (b) we obtain   (c)  by  Marcinkiewicz interpolation theorem and  duality argument.  The proof of Theorem \ref{th1} is complete .
\subsection{ Estimates of Dunkl wave equation}
 In this section we are going   to apply  Theorem \ref{th1} to the Dunkl wave equation.
Our goal  is to prove Theorem \ref{thm2}.
\par We will need to consider the    Riesz transforms for Dunkl transform $R_j$ , $j=1...n$  which  defined on $L^2(\mathbb{R}^n,w_k(x)dx)$ by
$$\mathcal{F}_k(R_j(f))(\xi)=-i\frac{\xi_j}{|\xi|}\mathcal{F}_k(f)(\xi).$$
We have the following result.
\begin{thm}[\cite{AS}]\label{Riesz}
The Riesz transforms $R_j$, $1\leq j\leq n$ are bounded operators on $L^p(\mathbb{R}^n,w_k(x)dx)$ for $1 <p< \infty$.
\end{thm}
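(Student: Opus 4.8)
The plan is to realize each Riesz transform $R_j$ as a Calder\'on--Zygmund operator on the space of homogeneous type $\bigl(\mathbb{R}^n,d,w_k(x)\,dx\bigr)$, where $d(x,y)=\min_{g\in G}|x-gy|$ is the $G$-orbit quasi-distance and the measure $w_k(x)\,dx$ is doubling (we write $w_k(B)=\int_B w_k(x)\,dx$ for a Euclidean ball $B$). The $L^2$-boundedness is free: by Plancherel's theorem $\|R_jf\|_{2,k}\le\|f\|_{2,k}$, since the Dunkl multiplier $\xi\mapsto-i\xi_j/|\xi|$ has modulus at most $1$. So the entire problem is to produce an integral-kernel representation $R_jf(x)=\int_{\mathbb{R}^n}K_j(x,y)f(y)\,w_k(y)\,dy$ for $f$ in a dense class, and to verify size and H\"ormander-type estimates for $K_j$ with respect to $d$ and $w_k\,dx$. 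Granting those, the Calder\'on--Zygmund theory for doubling measures gives weak type $(1,1)$, hence $L^p$-boundedness for $1<p\le2$ by interpolation with the $L^2$ bound, and then $2<p<\infty$ follows by duality, because $-i\xi_j/|\xi|$ is purely imaginary and odd, so $R_j$ is skew-adjoint on $L^2(\mathbb{R}^n,w_k(x)\,dx)$, $R_j^*=-R_j$, and boundedness on $L^p$ forces boundedness on $L^{p'}$.

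To get the kernel I would write $R_j=\pm\,D_j^k(-\Delta_k)^{-1/2}$ (the sign is immaterial) and use subordination: on the Dunkl--Fourier side $|\xi|^{-1}=\pi^{-1/2}\int_0^\infty e^{-t|\xi|^2}\,t^{-1/2}\,dt$, whence $(-\Delta_k)^{-1/2}=\pi^{-1/2}\int_0^\infty e^{t\Delta_k}\,t^{-1/2}\,dt$ and, for $f\in\mathcal{S}(\mathbb{R}^n)$,
$$R_jf(x)=\frac{\pm1}{\sqrt{\pi}}\int_0^\infty\bigl(D_j^k\bigr)_x\bigl(e^{t\Delta_k}f\bigr)(x)\,\frac{dt}{\sqrt{t}}=\int_{\mathbb{R}^n}K_j(x,y)f(y)\,w_k(y)\,dy,$$
with $K_j(x,y)=\pm\pi^{-1/2}\int_0^\infty\bigl(D_j^k\bigr)_x h_t(x,y)\,t^{-1/2}\,dt$, where $h_t(x,y)$ is the Dunkl heat kernel and $\bigl(D_j^k\bigr)_x$ acts in the first variable.

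The core of the argument is the pointwise control of $h_t$ and of its Dunkl gradient. I would invoke the known Gaussian-type bounds: there exist $C,c>0$ with
$$0<h_t(x,y)\le\frac{C}{w_k\bigl(B(x,\sqrt{t})\bigr)}\sum_{g\in G}e^{-|x-gy|^2/(ct)},$$
$$\bigl|\bigl(D_j^k\bigr)_x h_t(x,y)\bigr|\le\frac{C}{\sqrt{t}\,w_k\bigl(B(x,\sqrt{t})\bigr)}\sum_{g\in G}e^{-|x-gy|^2/(ct)},$$
together with the analogous second-order bound. Inserting the first two into the $t$-integral, splitting near $t\approx d(x,y)^2$ and using doubling to compare $w_k(B(x,\sqrt{t}))$ with $w_k(B(x,d(x,y)))$, one obtains the size estimate $|K_j(x,y)|\le C\,w_k\bigl(B(x,d(x,y))\bigr)^{-1}$; a parallel computation with the gradient and second-order bounds yields the H\"ormander regularity, for instance
$$|K_j(x,y)-K_j(x',y)|\le C\,\frac{|x-x'|}{d(x,y)}\cdot\frac{1}{w_k\bigl(B(x,d(x,y))\bigr)}\qquad\text{when }|x-x'|\le\tfrac12 d(x,y),$$
and symmetrically in $y$ --- exactly the hypotheses of the Calder\'on--Zygmund theorem on a space of homogeneous type.

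The main obstacle is precisely the heat-kernel estimates just quoted, above all the bound on $\bigl(D_j^k\bigr)_x h_t(x,y)$: the Dunkl heat kernel has no explicit form for a general reflection group, the Dunkl translation $\tau_x$ is not positivity-preserving, and the $G$-orbit sum is an essential structural feature rather than a convenience, so these sharp bounds are genuinely delicate and carry the real weight of the proof --- one either proves them directly from the heat semigroup or imports them from the literature. A secondary technical point is to check that the orbit sums are compatible with the quasi-metric $d$ and the doubling measure, so that $K_j$ is a bona fide Calder\'on--Zygmund kernel, and that the improper integral defining $K_j$ converges off the diagonal and genuinely represents $R_j$ on $\mathcal{S}(\mathbb{R}^n)$. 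With all this in hand, the passage to weak type $(1,1)$, to $L^p$ for $1<p\le2$, and then to $2<p<\infty$ via the skew-adjointness of $R_j$ is routine.
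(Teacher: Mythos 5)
You should first note that the paper does not prove Theorem \ref{Riesz} at all: it is imported from the reference \cite{AS}, so there is no internal proof to match your argument against. The proof in \cite{AS} is, in any case, quite different in spirit from yours: it analyses directly the explicit principal–value kernel of $R_j$, namely (a constant times) the Dunkl translation of the classical Riesz kernel $y_j|y|^{-(n+2\gamma_k+1)}$, and verifies the size and H\"ormander-type conditions by hand, using R\"osler's representation of the intertwining operator and of the translation of radial-type functions, before invoking singular-integral theory for the doubling measure $w_k(x)\,dx$; no heat semigroup or subordination enters.

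Judged on its own, your proposal has a genuine gap, in fact two. First, the load-bearing inputs are exactly the statements you do not prove: the Gaussian-type upper bound for $h_t(x,y)$ with the volume factor $w_k(B(x,\sqrt{t}))^{-1}$ and the orbit distance, and above all the analogous bound for $(D_j^k)_x h_t(x,y)$ (plus the second-order bounds you need for kernel regularity). These cannot simply be ``imported from the literature'': the sharp upper bound for $h_t$ in that form was obtained only well after \cite{AS}, and Dunkl-gradient Gaussian estimates of the strength you assume are delicate and only partially available even now, precisely because $h_t$ has no closed form and $\tau_x$ is not positivity preserving. Since you concede that these bounds ``carry the real weight of the proof,'' the proposal reduces the theorem to assertions at least as hard as the theorem itself. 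Second, the Calder\'on--Zygmund step is not off-the-shelf as claimed: $d(x,y)=\min_{g\in G}|x-gy|$ vanishes on whole $G$-orbits, so $(\mathbb{R}^n,d,w_k(x)\,dx)$ is not a space of homogeneous type in the usual sense (distinct points at distance zero, and $K_j$ does not descend to the quotient), while with the Euclidean metric the standard hypotheses fail because the translated kernel is singular not only at $x=y$ but also (more mildly) at the reflected points $x=gy$, where $d(x,y)=0$. The known remedy is a modified Calder\'on--Zygmund theory whose kernel conditions mix $|x-y|$ and $d(x,y)$ and whose decomposition also excises the reflected balls; your statement that the quoted bounds put you ``exactly'' in the classical framework glosses over this, and your Lipschitz-type regularity condition would in addition have to be checked across reflection hyperplanes, where $D_j^k$ produces difference terms. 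The genuinely routine parts of your plan ($L^2$ via Plancherel, the subordination formula, Marcinkiewicz/Riesz--Thorin interpolation, duality via skew-adjointness) are correct, but they were never the issue.
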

The second main auxiliary result which will be useful to prove our theorem is the following
\begin{lem}\label{ASS}
  Let $\psi$ be a radial smooth function on $\mathbb{R}^n$ such that $\psi(\xi)=0$ if $|\xi|\leq 1$ and $\psi(\xi)=1$ if $|\xi|\geq 2$. Then
the Dunkl multiplier defined by
 $\mathcal{A}_\psi(f)=\mathcal{F}^{-1}\left(\frac{\psi(\xi)}{|\xi|}\mathcal{F}_k(f)\right)$ is a bounded operator from
$L^p(\mathbb{R}^n,w_k(x)dx)$ to $L^\infty(\mathbb{R}^n,w_k(x)dx)$ for all $p>n+2\gamma_k$.
\end{lem}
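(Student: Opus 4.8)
The plan is to realize $\mathcal{A}_\psi$ as a Dunkl convolution with a radial kernel and then apply Hölder's inequality together with the contraction property (\ref{trp}) of the Dunkl translations. Put $m(\xi)=\psi(\xi)/|\xi|$, a bounded radial function, and $\Psi=\mathcal{F}_k^{-1}(m)$. Since $\mathcal{F}_k$ preserves radiality, $\Psi$ is radial, and once we know $\Psi\in L^1(\mathbb{R}^n,w_k(x)dx)$ with $\mathcal{F}_k(\Psi)=m$, identity (\ref{conv}) gives $\mathcal{A}_\psi(f)=\Psi*_kf=f*_k\Psi$ for $f\in\mathcal{S}(\mathbb{R}^n)$. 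Then, writing $Q=n+2\gamma_k$ for the homogeneous dimension and using the definition of $*_k$,
$$|\mathcal{A}_\psi(f)(x)|=\Bigl|\int_{\mathbb{R}^n}\tau_x(\Psi)(-y)\,f(y)\,w_k(y)\,dy\Bigr|\le\|\tau_x(\Psi)\|_{p',k}\,\|f\|_{p,k}\le\|\Psi\|_{p',k}\,\|f\|_{p,k},$$
where the last inequality is (\ref{trp}) applied to the radial function $\Psi$. Hence the whole statement reduces to the kernel estimate: $\Psi\in L^{p'}(\mathbb{R}^n,w_k(x)dx)$ whenever $p>n+2\gamma_k$, i.e. $1\le p'<Q/(Q-1)$.

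To bound $\Psi$ I would decompose $m$ dyadically in a way adapted to the homogeneity of the Dunkl structure. Fix a radial $\varphi\in C_c^\infty(\mathbb{R}^n)$ supported in $\{1/2\le|\xi|\le2\}$ with $\sum_{j\ge0}\varphi(2^{-j}\xi)=1$ for $|\xi|\ge1$; since $\psi$ vanishes on $\{|\xi|\le1\}$ we get $m=\sum_{j\ge0}m_j$ with $m_j(\xi)=m(\xi)\varphi(2^{-j}\xi)$ supported in $\{|\xi|\sim2^j\}$. A direct rescaling shows $m_j(\xi)=2^{-j}g_j(2^{-j}\xi)$ with $g_j(\eta)=\psi(2^j\eta)|\eta|^{-1}\varphi(\eta)\in C_c^\infty(\mathbb{R}^n)$, and $g_j$ equals the fixed function $g(\eta)=|\eta|^{-1}\varphi(\eta)$ for all $j\ge2$. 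Using the elementary scaling rule $\mathcal{F}_k\bigl(h(\lambda\,\cdot)\bigr)(\xi)=\lambda^{-Q}(\mathcal{F}_kh)(\lambda^{-1}\xi)$, which follows from $E_k(\lambda x,y)=E_k(x,\lambda y)$ and $w_k(\lambda x)=\lambda^{2\gamma_k}w_k(x)$, one obtains
$$\Psi_j:=\mathcal{F}_k^{-1}(m_j)=2^{j(Q-1)}\,\Theta_j(2^j\,\cdot),\qquad \Theta_j:=\mathcal{F}_k^{-1}(g_j)\in\mathcal{S}(\mathbb{R}^n),$$
with $\Theta_j=\Theta:=\mathcal{F}_k^{-1}(g)$ for $j\ge2$. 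Since $w_k$ is homogeneous of degree $2\gamma_k$, a change of variables gives $\|\Theta_j(2^j\,\cdot)\|_{p',k}=2^{-jQ/p'}\|\Theta_j\|_{p',k}$, hence $\|\Psi_j\|_{p',k}\le C\,2^{j(Q-1-Q/p')}$ with $C$ uniform in $j$ (the $\Theta_j$ are Schwartz and eventually constant).

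Summing the pieces,
$$\|\Psi\|_{p',k}\le\sum_{j\ge0}\|\Psi_j\|_{p',k}\le C\sum_{j\ge0}2^{j(Q-1-Q/p')}<\infty$$
exactly when $Q-1-Q/p'<0$, that is $p'<Q/(Q-1)$, i.e. $p>n+2\gamma_k$; the very same geometric sum, carried out with arbitrarily many derivatives/decay powers of the Schwartz functions $\Theta_j$, shows that the series also converges in $L^1(\mathbb{R}^n,w_k(x)dx)$, which legitimizes $\mathcal{F}_k(\Psi)=m$ and the convolution identity used at the outset. The genuinely load-bearing step is this summation of the dyadic pieces: the exponent $Q-1-Q/p'$ changes sign precisely at $p=n+2\gamma_k$, so the argument is sharp and there is no slack to exploit a softer estimate. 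Everything else—the scaling law for $\mathcal{F}_k$, the stability of $\mathcal{S}(\mathbb{R}^n)$ and of radial functions under $\mathcal{F}_k$, and the Hölder/translation bound—is routine from the facts recalled in Section~1.
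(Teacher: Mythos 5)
Your argument is correct and follows essentially the same route as the paper's proof: a dyadic decomposition of the multiplier $\psi(\xi)/|\xi|$ into rescaled copies of a fixed $C_c^\infty$ bump, the scaling law for $\mathcal{F}_k$ with homogeneous dimension $Q=n+2\gamma_k$, and H\"older plus the translation bound (\ref{trp}), leading to the same geometric series with exponent $Q/p-1=Q-1-Q/p'$. The only cosmetic difference is that you sum the kernel pieces in $L^{p'}$ first and apply H\"older once, whereas the paper applies H\"older to each dyadic convolution term.
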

\begin{proof}
Let $\rho$ be a  $C^\infty$ function  such that
$supp(\rho)\subset\{1/2\leq|\xi|\leq 2\}$ and
$$ \sum_{j=-\infty}^\infty\rho(2^{-j} \xi )=1,\qquad  \xi\neq 0.$$
Decompose,
\begin{eqnarray*}
 \frac{\psi(\xi)}{|\xi|}\mathcal{F}_k(f)(\xi)&=&\sum_{j=0}^\infty\rho(2^{-j} \xi )\frac{\psi(\xi)}{|\xi|}\mathcal{F}_k(f)(\xi)
\\&=& \left(\rho( \xi )+ \rho\left(  \frac{\xi}{2} \right)\right)\frac{\psi(\xi)}{|\xi|}\mathcal{F}_k(f)(\xi)+ \sum_{j=2}^\infty\frac{2^{-j} \rho(2^{-j} \xi )}{|2^{-j}\xi|}\mathcal{F}_k(f)(\xi)
\\&=& \psi_1(\xi)\mathcal{F}_k(f)(\xi)+ \sum_{j=2}^\infty 2^{-j} \psi_2(2^{-j}\xi)\mathcal{F}_k(f)(\xi),
\end{eqnarray*}
we get
\begin{eqnarray*}
 \mathcal{A}_{\psi}(f)= \mathcal{F}_k^{-1}(\psi_1)*_kf+ \sum_{j=2}^\infty 2^{ (n+2\gamma_k-1)j} \mathcal{F}_k^{-1}(\psi_2)(2^{ j}.)*_k f.
\end{eqnarray*}
Using  H\"{o}lder's inequality  and (\ref{trp}) it follows that, for $p>n+2\gamma_k$
   \begin{eqnarray*}
 \|\mathcal{A}_{\psi}(f)\|_\infty &\leq& \|f\|_{p,k}\left\{\|\mathcal{F}_k^{-1}(\psi_1)\|_{p',k}
+\sum_{j=2}^\infty 2^{ j( -1 + (n+2\gamma_k)/p)} \|\mathcal{F}_k^{-1}(\psi_2)\|_{p',k}\right\}
\\&\leq& C\;  \|f\|_{p,k},
\end{eqnarray*}
which is the desired result.
\end{proof}
\par We will also need the following lemma
\begin{lem}\label{14}
 Let $y\in  \mathbb{R}$ and $\Psi_j $ be the function given by $\Psi_j(x)=x_j \Phi_{iy}(x)$, $x\in\mathbb{R}^n$. Then  we can find a constant $ c> 0 $ that does not depend on $ y $ and  such that
$$\|\tau_z(\Psi_j)\|_{\infty,k}\leq  c\; e^{c|y|},$$
 for all $ z \in \mathbb {R}^n $.
 \end{lem}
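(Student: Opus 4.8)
The plan is to bound $\|\tau_z(\Psi_j)\|_{\infty,k}$ by passing to the Dunkl transform side, using the estimate \eqref{trs} together with an explicit control of $\mathcal{F}_k(\Psi_j)$. First I would observe that since $\Psi_j(x)=x_j\Phi_{iy}(x)$, one can relate its Dunkl transform to that of $\Phi_{iy}$ via the intertwining property (i): because $\mathcal{F}_k(D^k_j h)(\xi)=i\xi_j\mathcal{F}_k(h)(\xi)$, and conversely multiplication by $x_j$ on the space side corresponds (up to a constant) to applying $D^k_j$ on the transform side, we get $\mathcal{F}_k(\Psi_j)(\xi)=c\,D^k_j\big(\mathcal{F}_k(\Phi_{iy})\big)(\xi)$. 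By Proposition \ref{prop}, $\mathcal{F}_k(\Phi_{iy})(\xi)=|\xi|^{iy-\gamma_k-n/2}J_{\gamma_k+n/2-iy}(|\xi|)$, which is a radial function of $|\xi|$, so its image under $D^k_j$ reduces to an ordinary derivative: $D^k_j g(\xi)=\partial_j g(\xi)$ for radial smooth $g$, giving $\mathcal{F}_k(\Psi_j)(\xi)=c\,\frac{\xi_j}{|\xi|}\,G'(|\xi|)$ where $G(r)=r^{iy-\gamma_k-n/2}J_{\gamma_k+n/2-iy}(r)$.

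Next I would estimate $\|\mathcal{F}_k(\Psi_j)\|_{1,k}$. Using the Bessel recurrence $\frac{d}{dr}\big(r^{-\nu}J_\nu(r)\big)=-r^{-\nu}J_{\nu+1}(r)$ with $\nu=\gamma_k+n/2-iy$, the derivative $G'(r)$ is again a combination of terms of the form $r^{iy-\gamma_k-n/2}J_{\gamma_k+n/2-iy}(r)$ and $r^{iy-\gamma_k-n/2}J_{\gamma_k+n/2+1-iy}(r)$, each carrying an extra factor $r^{-1}$ from the differentiation of the power or an $O(1)$ factor. Then I invoke the uniform Bessel bound \eqref{be}: $|t^{-(\eta+i\zeta)}J_{\eta+i\zeta}(t)|\le c_\eta e^{c|\zeta|}(1+t)^{-\eta-1/2}$ with $\eta=\gamma_k+n/2$ (and $\eta=\gamma_k+n/2+1$) and $\zeta=-y$, which shows $|G'(r)|\le c\,e^{c|y|}\,r^{-\gamma_k-n/2}(1+r)^{-\gamma_k-n/2-1/2}\cdot(\text{something})$; the constant $c_\eta$ is fixed because $\eta$ does not depend on $y$. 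Integrating $|\mathcal{F}_k(\Psi_j)(\xi)|$ against $w_k(\xi)\,d\xi$, and passing to polar coordinates (so that the weight contributes $r^{2\gamma_k+n-1}\,dr$ up to the angular integral), the integrand near $r=0$ behaves like $r^{2\gamma_k+n-1}\cdot r^{-\gamma_k-n/2}$ which is integrable since $\gamma_k+n/2-1>-1$, and near $r=\infty$ it decays like $r^{2\gamma_k+n-1}\cdot r^{-2\gamma_k-n-1/2}=r^{-3/2}$, also integrable. This yields $\|\mathcal{F}_k(\Psi_j)\|_{1,k}\le c\,e^{c|y|}$ with $c$ independent of $y$.

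Finally, by \eqref{trs} we conclude $\|\tau_z(\Psi_j)\|_{\infty,k}\le\|\mathcal{F}_k(\Psi_j)\|_{1,k}\le c\,e^{c|y|}$ uniformly in $z\in\mathbb{R}^n$, which is the claimed estimate. I expect the main obstacle to be the careful bookkeeping in the second step: one must check that differentiating $G$ does not worsen the $y$-dependence (i.e., that all constants $c_\eta$ appearing stay bounded as functions of the \emph{fixed} parameters $\gamma_k,n$ and only the exponential $e^{c|y|}$ captures the $y$-growth), and that the resulting radial integral is convergent at both endpoints — in particular that the singularity at the origin coming from the negative power of $|\xi|$ is absorbed by the weight $w_k$. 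A minor technical point is justifying the identity $\mathcal{F}_k(x_j h)=c\,D^k_j\mathcal{F}_k(h)$ rigorously for $h=\Phi_{iy}$, which is not Schwartz; this can be handled by approximation or by working directly with the radial integral representation \eqref{rad} and differentiating under the integral sign, which is legitimate by the decay of the Bessel kernel.
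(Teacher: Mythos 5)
There is a genuine gap, and it is fatal to the route you chose: the claim that $\|\mathcal{F}_k(\Psi_j)\|_{1,k}\le c\,e^{c|y|}$ is false. Your bookkeeping double-counts the decay. By the recurrence $\frac{d}{dr}\bigl(r^{-\nu}J_\nu(r)\bigr)=-r^{-\nu}J_{\nu+1}(r)$ with $\nu=\gamma_k+n/2-iy$, one gets exactly (this is the paper's identity (\ref{113}))
$$\mathcal{F}_k(\Psi_j)(\xi)=-i\,\xi_j\,|\xi|^{-\gamma_k-n/2+iy-1}J_{\gamma_k+n/2+1-iy}(|\xi|),$$
and the estimate (\ref{be}) bounds the \emph{product} $t^{-(\eta+i\zeta)}J_{\eta+i\zeta}(t)$ by $c_\eta e^{c|\zeta|}(1+t)^{-\eta-1/2}$; you cannot keep the prefactor $r^{-\gamma_k-n/2}$ as additional decay on top of $(1+r)^{-\gamma_k-n/2-1/2}$, because $|J_\eta(t)|$ itself only decays like $t^{-1/2}$ regardless of the order. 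The correct large-$|\xi|$ size of $|\mathcal{F}_k(\Psi_j)(\xi)|$ is $|\xi|^{-\gamma_k-n/2-1/2}$, not $|\xi|^{-2\gamma_k-n-1/2}$. Against the weight, the radial integral at infinity then behaves like $\int^\infty r^{2\gamma_k+n-1}\,r^{-\gamma_k-n/2-1/2}\,dr=\int^\infty r^{\gamma_k+n/2-3/2}\,dr$, which diverges for every $n\ge 1$ and $\gamma_k\ge 0$ (already in the classical case $k=0$: $\Psi_j$ has a jump-type singularity on the unit sphere, so its transform decays only like $|\xi|^{-(n+1)/2}$, which is not integrable). Hence $\mathcal{F}_k(\Psi_j)\notin L^1(\mathbb{R}^n,w_k(x)dx)$ and the final appeal to (\ref{trs}) collapses; this is precisely why the lemma is not a one-line consequence of the transform side.

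The paper circumvents this by never leaving the space side: since $\Psi_j$ is not radial, (\ref{trp}) is unavailable, but $\Psi_j$ is (up to a constant involving $1-iy$) the ordinary $x_j$-derivative of the \emph{radial, bounded, compactly supported} function $\Phi_{-1+iy}$. After an $h_\varepsilon$-regularization to make this function smooth, one uses that $\partial_j=D_j^k$ on radial functions and that $\tau_z$ commutes with $D_j^k$, then computes $D_j^k\tau_z(\Phi_{-1+iy}h_\varepsilon)$ explicitly through the radial translation formula (\ref{trad}) and passes to the limit $\varepsilon\to 0$ in $L^2$. The resulting representation (\ref{118}) of $\tau_z(\Psi_j)(x)$ is then bounded pointwise: on the support one has $|x_j-\eta_j|\le A_z(x,\eta)\le 1$, the factor $|\Gamma(1-iy)|^{-1}$ is controlled by (\ref{gam}), and the difference-quotient terms coming from the reflection part of $D_j^k$ are handled by a mean-value (line-integral) argument, all with constants $c\,e^{c|y|}$ uniform in $z$. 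If you want to salvage a transform-side argument you would need an $L^1$ bound that simply is not there; some version of the paper's direct estimate on $\tau_z(\Psi_j)$ seems unavoidable.
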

\begin{proof} Let $\varepsilon>0$.  Define
 \begin{eqnarray*}
  h_\varepsilon(x)=\left\{
            \begin{array}{ll}
              e^{-\varepsilon/(1-|x|^2)}, & \hbox{if $|x|<1$,}\\
 0,& \hbox{ if $|x|\geq 1$,}
            \end{array}
          \right.
 \end{eqnarray*}
 It  follows  that
$h_\varepsilon\;\phi_{iy}$ and  $h_\varepsilon\; \phi_{-1+iy}$ are $C^\infty$-functions supported in the unit ball and
 $$\frac{\partial}{\partial x_j} \Big( \Phi_{-1+iy}(x) \;h_\varepsilon(x)\Big)=-\Psi_j(x)h_\varepsilon(x)-
 \frac{ \Psi_j (x)}{(1-iy)} \left(\frac{\varepsilon}{1-|x|^2}\;h_\varepsilon(x)\right).$$
Using the dominated convergence theorem we have the following
$$\|\Psi_j h_\varepsilon -\Psi_j\|_{2,k}\rightarrow 0,\qquad \text{as}\quad \varepsilon\rightarrow 0$$ and
$$ \left\|\left(\frac{\varepsilon}{1-|.|^2}\;h_\varepsilon \right)\Psi_j\right\|_{2,k}\rightarrow 0,\qquad \text{as}\quad \varepsilon\rightarrow 0$$
which from the  boundedness  of the Dunkl  translation opertaor $\tau_z$ on  $L^2(\mathbb{R}^n,w_k(x)dx)$   yield that
 \begin{equation}\label{pj}
 \left\|\tau_z\left(\frac{\partial}{\partial x_j} \Big( \Phi_{-1+iy} \;h_\varepsilon \Big)\right)+\tau_z( \Psi_j) \right\|_{2,k}\rightarrow 0 \qquad \text{as}\quad \varepsilon\rightarrow 0.
\end{equation}
However, since   $ h_\varepsilon\;\Phi_{-1+iy} $  is a $C^\infty$-radial function we have that
\begin{eqnarray*}
\tau_z\left(\frac{\partial}{\partial x_j} \Big( \Phi_{-1+iy}  \;h_\varepsilon \Big)\right)=
\tau_z\left( D_j^k \Big( \Phi_{-1+iy}  \;h_\varepsilon \Big)\right)=
D_j^k\tau_z  \Big( \Phi_{-1+iy}  \;h_\varepsilon \Big) .
\end{eqnarray*}
 We  next compute $D_j^k\tau_z  \Big( \Phi_{-1+iy}  \;h_\varepsilon \Big)$    and its limit when  $\varepsilon\rightarrow0$. Putting
$$A_z(x,\eta)=\sqrt{|x|^2+|z|^2-2\langle x,\eta \rangle}=\sqrt{|x-\eta|^2+|z|^2-|\eta|^2},$$
for $x\in \mathbb{R}^n$ and  $\eta \in conv(G.z)$, and
using the formula (\ref{trad}) and (\ref{Dxi}) we have that
\begin{eqnarray*}
  &&D_j^k\tau_z  \Big( \Phi_{-1+iy}  \;h_\varepsilon \Big) (x)\\&&=-\int_{\mathbb{R}^n}(x_j-\eta_j) \widetilde{\Phi_{iy}}(A_z(x,\eta))\widetilde{h_\varepsilon}(A_z(x,\eta))d\nu_z(\eta)\\&&\qquad \qquad \quad-\int_{\mathbb{R}^n}
\left(\frac{(x_j-\eta_j)\widetilde{\Phi_{iy}}(A_z(x,\eta)) }{(1-iy)}\right)\left(\frac{\varepsilon}{1-A_z(x,\eta)^2}\;\widetilde{h_\varepsilon}(A_z(x,\eta))\right)d\nu_z(\eta)\\&&
+ \sum_{\upsilon\in R^+}\frac{k_\upsilon\upsilon_j}{\langle x, \upsilon \rangle} \int_{\mathbb{R}^n}\Big(\widetilde{\Phi_{-1+iy}}(A_z(x,\eta))\widetilde{h_\varepsilon}(A_z(x,\eta))\\&&\qquad \qquad
\qquad\qquad\qquad\qquad\qquad \quad -
\widetilde{\Phi_{-1+iy}}(A_z(\sigma_\upsilon.x,\eta))\widetilde{h_\varepsilon}(A_z(\sigma_\upsilon.x,\eta))\Big)
d\nu_z(\eta).
\end{eqnarray*}
Therefore, from (\ref{pj})and   dominated convergence Theorem we obtain  for a.e. $x\in \mathbb{R}^n$,
\begin{eqnarray} \label{118}
  &&\tau_z( \Psi_j)(x)= \int_{\mathbb{R}^n}(x_j-\eta_j)\widetilde{ \Phi_{iy}}(A_z(x,\eta)) d\mu_z(\eta) \nonumber \\&&
 - \sum_{\upsilon\in R^+}\frac{k_\upsilon\upsilon_j}{\langle x, \upsilon \rangle}  \int_{\mathbb{R}^n}\Big(\widetilde{\Phi_{-1+iy}}(A_z(x,\eta)) -
\widetilde{\Phi_{-1+iy}}(A_z(\sigma_\upsilon.x,\eta)) \Big)
d\nu_z(\eta).
\end{eqnarray}
Note   that   in the integrands, $|(x_j-\eta_j)|\leq  A_z(x,\eta)  \leq 1 $ and by using  (\ref{gam})
$$|(x_j-\eta_j)\widetilde{ \Phi_{iy}}(A_z(x,\eta))|\leq c\;e^{c|y|}.$$
Also, if we write
\begin{eqnarray*}
  &&  \frac{\widetilde{\Phi_{-1+iy}}(A_z(x,\eta)) -
\widetilde{ \Phi_{-1+iy}}(A_z(\sigma_\upsilon.x,\eta))}{\langle x, \upsilon \rangle}
\\&&\qquad\qquad\qquad\qquad\qquad =-\sum_{j=1}^n\int_0^1(x_j-t\langle x,\upsilon\rangle\upsilon_j-\eta_j)\upsilon_j\widetilde{ \Phi_{ iy}}(A_z(x-t\langle x,\upsilon\rangle\upsilon,\eta))dt
\end{eqnarray*}
then   we have that
\begin{eqnarray*}
 \left| \frac{\widetilde{\Phi_{-1+iy}}(A_z(x,\eta)) -
\widetilde{\Phi_{-1+iy}}(A_z(\sigma_\upsilon.x,\eta))}{\langle x, \upsilon \rangle}\right|\leq c\;e^{c|y|}.
\end{eqnarray*}
Thus in view of  (\ref{118}), we  conclude  the proof of Lemma \ref{14}.
\end{proof}
 \textbf{Proof of Theorem \ref{thm2}. }
Applying Theorem \ref{th1}- (c), yields
$$ \|S_{ \gamma_k+(n-1)/2}(f)\|_{q,k}\leq c \|f\|_{p,k};\qquad f\in S(\mathbb{R}^n)$$
 for all couple
$(p,q)$ satisfying (\ref{q1}) or (\ref{q2}). So in view of (\ref{sol}) it will be enough to prove
\begin{equation}\label{eq3}
 \|S_{ \gamma_k+(n+1)/2}(f)\|_{q,k}\leq c  \left\| \sum_{j=1}^n|D_j^kf|\right\|_{p,k};\qquad f\in S(\mathbb{R}^n).
\end{equation}
We quote the following
$$\cos( \xi)\mathcal{F}_k(f)(\xi)= \sum_{j=1}^n \frac{\xi_j^2}{|\xi|^2}\cos( \xi)\mathcal{F}_k(f)(\xi)
= \sum_{j=1}^n \frac{\cos( \xi)}{|\xi|}\mathcal{F}_k\Big(R_j(D_j^kf)\Big)(\xi).$$
Hence from Theorem \ref{Riesz}  one can   reduce (\ref{eq3})    to show that
\begin{equation}\label{eq4}
 \left\|\mathcal{F}_k^{-1}\left( \frac{\cos( \xi)}{|\xi|}\mathcal{F}_k(f)(\xi)\right)\right\|_{q,k}\leq c \|  f\|_{p,k}.
\end{equation}
 Let $\psi$ be a radial smooth function on $\mathbb{R}^n$ such that $\psi(\xi)=0$ if $|\xi|\leq 1$ and $\psi(\xi)=1$ if $|\xi|\geq 2$.  Then the theorem \ref{HL}
implies
\begin{equation}\label{120}
 \left\|\mathcal{F}_k^{-1}\left( (1-\psi(\xi))\frac{\cos( \xi)}{|\xi|}\mathcal{F}_k(f)(\xi)\right)\right\|_{p,k}\leq c \|  f\|_{p,k},
\end{equation}
provide, $1<p\leq 2\leq q<\infty $ and  $1/p-1/q\geq 1/(2\gamma_k+n)$. Here clearly  conditions (\ref{q1}) and (\ref{q2}) are also  satisfied.
Thus we are reduced to showing that
\begin{equation*}
 \left\|\mathcal{F}_k^{-1}\left(  \psi(\xi)\frac{\cos( \xi)}{|\xi|}\mathcal{F}_k(f)(\xi)\right)\right\|_{p,k}\leq c \|  f\|_{p,k}.
\end{equation*}
For this purpose we  define an analytic family of linear operators  $U_z^j$  by
$$U_z^j(f)=\mathcal{F}_k^{-1}\Big(\psi(\xi) \xi_j|\xi|^{- n/2-\gamma_k+z-1}J_{ n/2+\gamma_k-z-1}(|\xi|)\mathcal{F}_k(f)(\xi)\Big)$$
for $0\leq Re(z)\leq \gamma_k+(n+1)/2$. To this family we  apply Stein's interpolation theorem, and proceeding as in the proof of
Theorem \ref{th1}. First on the boundary   $Re(z)=\gamma_k+(n+1)/2$ we have
$$\|U_{z}^j(f)\|_{2,k}\leq c\; e^{c|y|}\;\|f\|_{2,k} $$
which   is a simple consequence of (\ref{be}) and Plancherel Theorem.
\par For  $z=iy$,  in view of (\ref{be}), the function $\xi \rightarrow \psi(\xi) \xi_j|\xi|^{- n/2-\gamma_k+iy-1}J_{ n/2+\gamma_k-iy-1}(|\xi|)$ belongs to  $L^2(\mathbb{R}^n,w_k(x)dx)$. Then  one can write $U_{iy}^j$ as the convolution operator
$$U_{iy}^j(f)(x)=\mathcal{F}_k^{-1}\Big(\psi(\xi) \xi_j|\xi|^{- n/2-\gamma_k+z-1}J_{ n/2+\gamma_k-z-1}(|\xi|\Big)*_kf(x),\quad f\in L^2(\mathbb{R}^n,w_k(x)dx)$$
and to obtain a desired  $L^1-L^\infty$ estimate  for $U_{iy}^j$ as in Theorem \ref{sstt}
 it suffices to estimate
$$\left\|\tau_x \mathcal{F}_k^{-1}\Big(\psi(\xi) \xi_j|\xi|^{- n/2-\gamma_k+z-1}J_{ n/2+\gamma_k-z-1}(|\xi|\Big) \right\|_{\infty,k}.$$
We begin by recalling   the two classical identities for  Bessel function
 \begin{equation}\label{111}
\frac{d}{dt}(t^{-\nu} J_\nu(t))= -t^{-\nu} J_{\nu+1}(t)
\end{equation}
and
 \begin{equation}\label{112}
 J_{\nu+1}(t)=  2\nu J_\nu(t)/t-J_{\nu-1}(t).
\end{equation}
Let us observe first that  from  (\ref{Bes}) and  (\ref{111})
\begin{eqnarray}\label{113}
 \mathcal{F}_k(x_j\Phi_{iy})(\xi)=i D_j^k\mathcal{F}_k( \Phi_{iy})(\xi)=-i \xi_j |\xi|^{ -\gamma_k-n/2+iy-1 }J_{ \gamma_k+n/2-iy +1 }(|\xi|),
\end{eqnarray}
 and from (\ref{112})
\begin{eqnarray*}
 \mathcal{F}_k(x_j\Phi_{iy})(\xi)&=&i \xi_j |\xi|^{ -\gamma_k-n/2+iy-1 }J_{ \gamma_k+n/2-iy -1 }(|\xi|)
\\&&\qquad -i  (n+2\gamma_k-iy)\xi_j |\xi|^{ -\gamma_k-n/2+iy-2 }J_{ \gamma_k+n/2-iy   }(|\xi|).
\end{eqnarray*}
Hence
\begin{eqnarray*}
&&\mathcal{F}_k^{-1}\Big(\psi(\xi)\xi_j |\xi|^{ -\gamma_k-n/2+iy-1 }J_{ \gamma_k+n/2-iy -1 }(|\xi|)\Big)
 =-i \mathcal{F}_k^{-1}\Big(\psi(\xi) \mathcal{F}_k(x_j\Phi_{iy})(\xi)\Big)
\\&& +(n+2\gamma_k-iy)\mathcal{F}_k^{-1}\Big(\psi(\xi)\xi_j |\xi|^{ -\gamma_k-n/2+iy-2 }J_{ \gamma_k+n/2-iy   }(|\xi|)\Big).
\end{eqnarray*}
 Now,  write
\begin{eqnarray*}
 \mathcal{F}_k^{-1}\Big(\psi(\xi) \mathcal{F}_k(x_j\Phi_{iy} )(\xi)\Big)
=\mathcal{F}_k^{-1}\Big((\psi(\xi)-1) \mathcal{F}_k(x_j\Phi_{iy} )(\xi)\Big)
+ x_j\Phi_{iy} (x).
 \end{eqnarray*}
The function $\xi\rightarrow (\psi(\xi)-1) \mathcal{F}_k(x_j\phi_{iy})(\xi)$ is a $C^\infty$ with compact support,
so by  using (\ref{trs}),   (\ref{be}) and  (\ref{113}) it follows that
$$\left\|\tau_x\Big\{\mathcal{F}_k^{-1}(  (\psi(\xi)-1) \mathcal{F}_k(x_j\Phi_{iy})(\xi))\Big\}\right\|_{\infty,k}\leq c\;e^{c|y|}.$$
Thus, in view   Lemma \ref{14}
\begin{equation}\label{114}
   \left\|\tau_x\mathcal{F}_k^{-1}\Big(\psi(\xi) \mathcal{F}_k(x_j\Phi_{iy})(\xi)\Big)\right\|_{\infty, k}\leq c\;e^{c|y|}.
\end{equation}
On the other hand, one can write
\begin{eqnarray*}
\mathcal{F}_k^{-1}\Big(\psi(\xi)\xi_j |\xi|^{ -\gamma_k-n/2+iy-2 }J_{ \gamma_k+n/2-iy   }(|\xi|)\Big)&=&
\mathcal{F}_k^{-1} \left(\frac{\xi_j}{|\xi|}\left(  \frac{\psi(\xi)}{|\xi|} \mathcal{F}_k(\Phi_{iy})(\xi)\right)\right) \\
&=&i\mathcal{A}_\psi(R_j(\Phi_{iy}))
\end{eqnarray*}
Now for  $p>n+2\gamma_k$ the  radial function $\phi_{iy}$  belongs to $L^p(\mathbb{R}^n,w_k(x)dx)$, thus we can apply Theorem \ref{Riesz},
Lemma \ref{ASS} and (\ref{Y}) to obtain
\begin{eqnarray*}
\left\|\tau_x\mathcal{F}_k^{-1}\Big(\psi(\xi)\xi_j |\xi|^{ -\gamma_k-n/2+iy-2 }J_{ \gamma_k+n/2-iy   }(|\xi|)\Big)\right\|_{\infty,k}
&=&\|\mathcal{A}_\psi(R_j(\tau_x(\phi_{iy})))\|_{\infty,k}\\&\leq& c \|\phi_{iy})\|_{p,k}\leq c\; e^{c|y|}.
\end{eqnarray*}
This together  with (\ref{114}) yield
$$\left\|\tau_x \mathcal{F}_k^{-1}\Big(\psi(\xi) \xi_j|\xi|^{- n/2-\gamma_k+z-1}J_{ n/2+\gamma_k-z-1}(|\xi|\Big) \right\|_{\infty,k}\leq c\;e^{c|y|}$$
and therefore
$$\|U_{iy}^j(f)\|_{\infty,k}\leq c\; e^{c|y|}\;\|f\|_{1,k}.$$
The Stein  interpolation theorem now implies the following
$$\|U_{\alpha}^j(f)\|_{p',k}\leq c\;  \|f\|_{p,k}$$
for all $0\leq \alpha\leq\gamma_k+(n+1)/2$ and
 $p= (n+1+2\gamma_{k})/(n+1+2\gamma_{k}-\alpha) $.
In particular for $\alpha=\gamma_k+(n-1)/2$ and $p=2(n+2\gamma_k+1)/(n+2\gamma_k+3)$we have that
\begin{equation}\label{115}
  \left\|\mathcal{F}_k^{-1}\left(\psi(\xi)\xi_j \frac{ \cos(|\xi|)}{|\xi|^{  2}}\mathcal{F}_k(f)(\xi)\right)\right\|_{p',k}\leq c\;  \|f\|_{p,k}
\end{equation}
Using the fact that
 \begin{eqnarray*}
  \mathcal{F}_k^{-1}\left(\psi(\xi)  \frac{ \cos(|\xi|)}{|\xi| }\mathcal{F}_k(f)(\xi)\right)
&=&\sum_{j=1}^n \mathcal{F}_k^{-1}\left(\psi(\xi)\frac{\xi^2_j}{|\xi|^2}  \frac{ \cos(|\xi|)}{|\xi| }\mathcal{F}_k(f)(\xi)\right)
\\&=&\sum_{j=1}^n \mathcal{F}_k^{-1}\left(\psi(\xi) \xi_j \frac{ \cos(|\xi|)}{|\xi|^2 }\mathcal{F}_k(R_j(f))(\xi)\right)
 \end{eqnarray*}
it follows from (\ref{115}) and Theorem  \ref{Riesz}
\begin{equation}\label{116}
  \left\|\mathcal{F}_k^{-1}\left(\psi(\xi)  \frac{ \cos(|\xi|)}{|\xi| }\mathcal{F}_k(f)(\xi)\right)\right\|_{p',k}\leq c\;  \|f\|_{p,k}
\end{equation}
On the other  hand from the  theorem \ref{HL} we have
\begin{equation}\label{117}
  \left\|\mathcal{F}_k^{-1}\left(\psi(\xi)  \frac{ \cos(|\xi|)}{|\xi| }\mathcal{F}_k(f)(\xi)\right)\right\|_{q,k}\leq c\;  \|f\|_{p,k}
\end{equation}
for $1<p\leq 2\leq q< \infty$, $1/p-1/q=1/(n+2\gamma_k)$. Therefore  with   the use of
 the Riesz-Thorin interpolation theorem   for  the couples $(L^{p_1}, L^{q_1})$ and $(L^{p_2}, L^{q_2})$ for
\begin{eqnarray*}
\left\{
  \begin{array}{ll}
     p_1=2, \quad q_1= 2(n+2\gamma_k)/(n+2\gamma_k-2), \\
p_2=2(n+2\gamma_k+1)/(n+2\gamma_k+3) , \quad q_2=2(n+2\gamma_k+1)/(n+2\gamma_k-1)
  \end{array}
\right.
    \end{eqnarray*}
 and
\begin{eqnarray*}
\left\{
  \begin{array}{ll}
  p_1= 2(n+2\gamma_k)/(n+2\gamma_k++2),\quad q_1 =2\\
p_2=2(n+2\gamma_k+1)/(n+2\gamma_k+3) , \quad q_2=2(n+2\gamma_k+1)/(n+2\gamma_k-1)  \end{array}
\right.
\end{eqnarray*}
we obtain
\begin{equation}\label{117}
  \left\|\mathcal{F}_k^{-1}\left(\psi(\xi)  \frac{ \cos(|\xi|)}{|\xi| }\mathcal{F}_k(f)(\xi)\right)\right\|_{q,k}\leq c\;  \|f\|_{p,k}
\end{equation}
for all $p$ and $q$ satisfying (\ref{q1}) or (\ref{q2}).   This combined with the estimate   (\ref{120}) yields (\ref{eq4}) and finishes  the proof of Theorem \ref{thm2}.

 \end{document}